\newcommand{\NN}{\mathbb{N}}
\newtheorem{theorem}{Theorem}[section]
\newtheorem{lemma}[theorem]{Lemma}
\newtheorem{proposition}[theorem]{Proposition}
\newtheorem{corollary}[theorem]{Corollary}
\newtheorem{definition}[theorem]{Definition}
\newtheorem{example}[theorem]{Example}
\newtheorem{remark}[theorem]{Remark}
\newcommand{\spb}[1]{\smallskip}
\newcommand{\mpb}[1]{\medskip}
\newcommand{\bpb}[1]{\bigskip}
\newcommand{\g}{\gamma}
\begin{document}
\DeclareGraphicsExtensions{.jpg,.pdf,.mps,.png}

\title{A note on $k$-metric dimensional graphs}

\author[Samuel G. Corregidor]{Samuel G. Corregidor}
\address{ Facultad CC. Matem\'aticas, Universidad Complutense de Madrid,
Plaza de Ciencias, 3. 28040 Madrid, Spain}
\email{samuguti@ucm.es}

\author[\'{A}lvaro Mart\'{\i}nez-P\'erez]{\'{A}lvaro Mart\'{\i}nez-P\'erez$^{(1)}$}
\address{ Facultad CC. Sociales, Universidad de Castilla-La Mancha,
Avda. Real F\'abrica de Seda, s/n. 45600 Talavera de la Reina, Toledo, Spain}
\email{alvaro.martinezperez@uclm.es}
\thanks{$^{(1)}$ Supported in part by a grant
from Ministerio de Econom{\'\i}a y Competitividad (MTM 2015-63612P), Spain.
}

\date{\today}


\begin{abstract} Given a graph $G = (V,E)$, a set 
$S \subset V$ is called a $k$-\emph{metric generator} for $G$ if any pair of different vertices of $G$ is distinguished by at least $k$ elements of $S$. 
A graph is $k$-\emph{metric dimensional} if $k$ is the largest integer such
that there exists a $k$-metric generator for $G$. This paper studies some bounds on the number $k$ for which a graph is $k$-metric dimensional.
\end{abstract}

\maketitle{}

{\it Keywords: Metric dimension, $k$-metric dimensional graph, block graph, clique tree.} 

{\it 2010 AMS Subject Classification numbers:} Primary 05C12; 05C90 Secondary 05C69.

\section{Introduction}

The concept of metric dimension of a graph naturally arises in applications. Suppose there is a a graph or network and there is the need to locate something in it using detectors placed on certain vertices. Then, any vertex must be uniquely determined by the distances to the detectors. This notion has been developed independently by J. P. Slater in \cite{S1,S2}, where the sets of vertices able to locate every node are called \emph{locating sets}, and Harary and Melter in \cite{HM}, where these sets are called \emph{resolving sets}. Harary and Melter also coined the name of metric dimension for the cardinality of a minimum resolving set.

This concept has been widely used in many areas as hazard detection in networks, see \cite{KCL,UTS};  navigation of robots in networks, see \cite{KRR,MT} or chemistry, see \cite{CEJO,J1,J2}. 

Metric dimension has been extensively studied, see for example \cite{ALA,BC,CH,CGH,CEJO,CSZ,FGO,T,YKR,YR} and the references therein.
There are also several natural extensions of the definition of metric dimension in the literature, some of them, combining it with the idea of domination. See, for example 
\cite{HH,OPZ,SZ}

Another natural extension of metric dimension appears in \cite{ERY}. See also \cite{EYR1,EYR2,EYR3,YER}. The idea is that, in order to improve the accuracy of the detection or the robustness of the system, it may be interesting to have a family of detectors such that every pair of vertices is distinguished by at least $k$ of them. Thus, given a simple and connected graph $G = (V,E)$, a set $S \subset V$ is called a $k$-\emph{metric generator} for $G$ if and only
if any pair of different vertices of $G$ is distinguished by at
least $k$ elements of $S$, i.e., for any pair of different vertices
$u,v \in V$, there exist at least $k$ vertices $w_1,w_2, \dots,w_k \in  S$
such that 
\[d_G(u,w_i) \neq d_G(v,w_i), \mbox{ for every } i \in \{1,\dots,k\}.\]
A $k$-\emph{metric basis} is a $k$-metric generator of the minimum cardinality in $G$. Notice that if $k=1$ we obtain the classical definitions of metric basis and metric generator. 
Finally, $G$ is said to be a $k$-\emph{metric dimensional graph} if $k$ is the largest integer such
that there exists a $k$-metric basis for $G$. Let us denote $Dim(G)=k$ if $G$ is a $k$-metric dimensional graph. Notice that \[Dim(G)\geq 2 \ \forall \, G.\]

In \cite{ERY}, the authors provide several bounds on $Dim(G)$ and give some precise results in the case of trees. Herein, we provide some new bounds for this invariant and generalize some of their results. In particular, we extend their study for the case of trees to the case of clique graphs obtaining natural generalizations.

\section{Bounds on $Dim(G)$}

Given two different vertices $x,y \in V(G)$, the set of \emph{distinctive vertices} of $x,y$ is
$$\mathcal{D}_G(x,y) = \{z \in V(G) \, : \, d_G(x, z) \neq d_G(y, z)\},$$

\begin{theorem}\cite[Th. 1]{ERY}\label{th:distinctive} A connected graph G is $k$-metric dimensional
if and only if $$k = \min_{x,y\in V(G), \ x\neq y} |\mathcal{D}_G(x,y)|.$$
\end{theorem}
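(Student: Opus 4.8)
The plan is to prove both directions of the biconditional by carefully unpacking the definitions. Let me set $m = \min_{x,y \in V(G), \, x \neq y} |\mathcal{D}_G(x,y)|$. The key observation connecting the definitions is that a set $S \subset V$ distinguishes a pair $u,v$ by at least $k$ elements precisely when $|S \cap \mathcal{D}_G(u,v)| \geq k$, since the vertices $w$ with $d_G(u,w) \neq d_G(v,w)$ are exactly the elements of $\mathcal{D}_G(u,v)$. So $S$ is a $k$-metric generator if and only if $|S \cap \mathcal{D}_G(u,v)| \geq k$ for every pair $u \neq v$.

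First I would establish that a $k$-metric generator can exist only when $k \leq m$. Indeed, taking $S = V(G)$ itself (the largest possible candidate), the number of distinguishing elements for a pair $u,v$ is exactly $|V \cap \mathcal{D}_G(u,v)| = |\mathcal{D}_G(u,v)|$. Thus the whole vertex set $V$ is a $k$-metric generator if and only if $|\mathcal{D}_G(u,v)| \geq k$ for all pairs, i.e. if and only if $k \leq m$. Since any $k$-metric generator $S$ satisfies $|S \cap \mathcal{D}_G(u,v)| \leq |\mathcal{D}_G(u,v)|$, the existence of any $k$-metric generator already forces $k \leq m$. This gives the upper bound half.

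Next I would show $V$ itself realizes the value $m$: by the computation above, $V$ is an $m$-metric generator (since $|\mathcal{D}_G(u,v)| \geq m$ for all pairs by definition of the minimum), so there exists an $m$-metric basis, and the graph admits a $k$-metric generator for every $k \leq m$. Combining the two halves, the largest integer $k$ for which a $k$-metric basis exists is exactly $m$, which is by definition $Dim(G)$. Hence $G$ is $k$-metric dimensional if and only if $k = m$, as claimed.

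I do not expect a serious obstacle here, as the result is essentially a reformulation of the definitions; the entire content lies in the single identity $|S \cap \mathcal{D}_G(u,v)| = \#\{w \in S : d_G(u,w) \neq d_G(v,w)\}$ and in recognizing that $S = V$ is simultaneously the most generous generator and the one whose distinguishing count equals $|\mathcal{D}_G(u,v)|$. The only point requiring a little care is the direction of the inequalities: one must check both that no $k > m$ generator can exist (monotonicity of intersection with a fixed set) and that the value $m$ is actually attained, so that $k = m$ is genuinely the \emph{largest} such integer rather than merely an upper bound.
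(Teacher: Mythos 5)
Your proposal is correct. Note that this paper does not prove the statement itself---it is quoted from the reference [ERY]---and your argument (any generator $S$ satisfies $|S\cap\mathcal{D}_G(u,v)|\leq|\mathcal{D}_G(u,v)|$, so no $k$-metric generator exists for $k$ above the minimum, while $S=V(G)$ attains it) is essentially the same as the standard proof given there, so there is nothing to add.
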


Let us recall the following definitions from \cite{ERY}. 
A vertex of degree at least
three in a graph $G$ will be called a major vertex of G. 
Any end-vertex (a vertex of degree one) $u$ of G is said to be a
terminal vertex of a major vertex $v$ of $G$ if
$d_G(u,v) < d_G(u,w)$ for every other major vertex $w$ of $G$.
The terminal degree $ter(v)$ of a major vertex $v$ is the
number of terminal vertices of $v$. 
Let $\mathcal{M}(G)$ be the set of exterior major vertices of
G having terminal degree greater than one.

Given $w \in \mathcal{M}(G)$ and a terminal vertex $u_j$ of $w$, let $P(u_j ,w)$ 
denote the shortest path that starts at $u_j$ and
ends at $w$. 
Now, given $w \in \mathcal{M}(G)$ and two terminal vertices $u_j ,u_r$ of $w$ let $P(u_j,w,u_r)$ denote the shortest path from $u_j$ to $u_r$ 
containing $w$, and by $\varsigma(u_j ,u_r)$ the length of $P(u_j,w,u_r)$.
Notice that, by definition of exterior major vertex,
$P(u_j,w,u_r)$ is obtained by concatenating the paths
$P(u_j,w)$ and $P(u_r,w)$, where $w$ is the only vertex of
degree greater than two lying on these paths.
Finally, given $w\in \mathcal{M}(G)$ and the set of terminal
vertices $U = \{u_1,u_2,\dots,u_k\}$ of $w$, for $j \neq r$ let 
$\varsigma (w) = \min_{u_j ,u_r \in U} {\varsigma(u_j ,u_r)}$ and 
$\varsigma (G) = \min_{w\in \mathcal{M}(G)}{\varsigma(w)}$.  

\begin{theorem}\cite[Th. 3]{ERY}\label{Th:sigma} Let $G$ be a connected graph such that
$\mathcal{M}(G) \neq \emptyset$. Then, $Dim(G) \leq \varsigma(G)$.
\end{theorem}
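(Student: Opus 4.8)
The plan is to use the distinctive-set characterization of Theorem~\ref{th:distinctive}: since $Dim(G) = \min_{x\neq y}|\mathcal{D}_G(x,y)|$, it suffices to exhibit a \emph{single} pair of distinct vertices $x,y$ with $|\mathcal{D}_G(x,y)| \le \varsigma(G)$. First I would pick $w \in \mathcal{M}(G)$ and two terminal vertices $u_j, u_r$ of $w$ realizing the minima, so that $\varsigma(u_j,u_r) = \varsigma(w) = \varsigma(G)$, and write $a = d_G(u_j,w)$, $b = d_G(u_r,w)$, $\ell = a+b = \varsigma(G)$. Labelling the path $P(u_j,w,u_r)$ as $u_j = p_0, p_1, \dots, p_a = w, \dots, p_{a+b} = u_r$, I would take as candidate pair $x = p_{a-1}$ and $y = p_{a+1}$, the two neighbours of $w$ lying on the two legs; these are distinct because, by the quoted remark, $P(u_j,w)$ and $P(u_r,w)$ meet only at $w$.

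The heart of the argument is the observation that each leg $P(u_j,w)$, $P(u_r,w)$ is a hanging path: by the definition of a terminal vertex of an exterior major vertex, every internal vertex of these legs has degree exactly two (such a vertex is not an end-vertex, and if it had degree at least three it would be a major vertex closer to the corresponding end-vertex than $w$, contradicting the choice of $w$). Consequently the only link between a leg and the rest of $G$ is through $w$, so for any vertex $z$ lying off the path $P(u_j,w,u_r)$ every shortest path from $z$ to $x$ or to $y$ passes through $w$, giving $d_G(z,x) = d_G(z,w)+1 = d_G(z,y)$. Hence no off-path vertex is distinctive, i.e.\ $\mathcal{D}_G(x,y) \subseteq \{p_0,\dots,p_{a+b}\}$.

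It then remains to evaluate distances on the path itself. For $p_i$ with $i<a$ one gets $d_G(p_i,x) = (a-1)-i$ while $d_G(p_i,y) = (a-i)+1$, differing by $2$; for $i>a$ symmetrically $d_G(p_i,x) = (i-a)+1$ and $d_G(p_i,y) = (i-a)-1$, again differing by $2$; and $w = p_a$ is equidistant. Thus every path vertex except $w$ is distinctive, so $\mathcal{D}_G(x,y) = \{p_0,\dots,p_{a+b}\}\setminus\{w\}$ has exactly $a+b = \varsigma(G)$ elements. Combining this with Theorem~\ref{th:distinctive} yields $Dim(G) \le |\mathcal{D}_G(x,y)| = \varsigma(G)$.

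I expect the main obstacle to be not any delicate estimate but the careful structural justification that the legs are hanging paths and that distances therefore factor through $w$; once this is in place the distance computations are routine and the bound follows at once. A minor point to dispatch cleanly is the existence of $x$ and $y$, which is guaranteed since each terminal vertex is an end-vertex distinct from the major vertex $w$, so $a,b \ge 1$.
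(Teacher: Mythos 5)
Your proof is correct: the key structural fact (internal vertices of the legs $P(u_j,w)$ and $P(u_r,w)$ have degree two, so each leg attaches to the rest of $G$ only through $w$) is exactly what makes $\mathcal{D}_G(p_{a-1},p_{a+1})$ collapse to the path minus $w$, giving the bound via Theorem~\ref{th:distinctive}. Note that this paper quotes the theorem from \cite{ERY} without reproducing a proof, and your argument --- choosing the two neighbours of $w$ on the legs and counting distinctive vertices along $P(u_j,w,u_r)$ --- is essentially the original proof given there.
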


However, this approach does not provide good bounds in many situations.

\begin{example} Suppose there exist two adjacent vertices $v_1,v_2$ with degree 2 such that both of them are adjacent to the same vertex $w$. Then, $w$ is a cut set and it is immediate to see that for any vertex $v'$ different from $v_1,v_2,w$ $d(v_1,v')=d(v_2,v')$. Thus, by Theorem \ref{th:distinctive}, $Dim(G)=2$, independently of the existence of terminal vertices or the value of $\varsigma(G)$.
\end{example}

Given any $v\in V$ and any $m\in \NN$, let $N(v,m)=\{w\in V \, : \, d(v,w)\leq m\}$, $S(v,m)=\{w\in V \, : \, d(v,w)= m\}$ and $\partial N(v,m)=\{w\in S(v,m) \, | \, d(w,G\setminus N(v,m))=1\}$.

\begin{definition} Given two different vertices $v,v'$ we say that they have \emph{equal $m$-boundary} if $\partial N(v,m)= \partial N(v',m)\neq \emptyset$.
\end{definition}

\begin{lemma}\label{l:boundary} Let $G$ be a connected graph. If two vertices, $v,v'$, in $G$   have equal $m$-boundary, then for every vertex $w$ in $G\setminus \Big( N(v,m)\cup N(v',m)\Big)$, $d(v,w)=d(v',w)$.
\end{lemma}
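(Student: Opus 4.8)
The plan is to show that, for any vertex $w$ lying outside both balls, its distance from either center is governed entirely by the common boundary set. Write $B := \partial N(v,m) = \partial N(v',m)$, which is nonempty by hypothesis, and fix $w \in G \setminus \big( N(v,m) \cup N(v',m) \big)$. I will establish the formula
\[
d(v,w) = m + \min_{b \in B} d(b,w),
\]
together with the identical formula for $v'$. Since the right-hand side depends only on $B$ and $m$, both of which are shared by $v$ and $v'$, the desired equality $d(v,w) = d(v',w)$ follows at once.

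The heart of the argument is a boundary-crossing observation. Let $P = (v = x_0, x_1, \dots, x_\ell = w)$ be a shortest path in $G$. Since $d(v,x_0) = 0 \le m$ while $d(v,x_\ell) > m$, there is a first index $i$ with $x_i \in N(v,m)$ and $x_{i+1} \notin N(v,m)$. Because consecutive vertices of a path differ in distance from $v$ by at most one, $d(v,x_{i+1}) \le d(v,x_i) + 1 \le m+1$; combined with $d(v,x_{i+1}) > m$ this forces $d(v,x_i) = m$ and $d(v,x_{i+1}) = m+1$. Thus $x_i \in S(v,m)$, and $x_i$ is adjacent to the vertex $x_{i+1} \notin N(v,m)$, so $d\big(x_i, G \setminus N(v,m)\big) = 1$ and hence $x_i \in B$. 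In other words, every geodesic from $v$ to $w$ leaves the ball $N(v,m)$ precisely through a vertex of $B$.

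From here both inequalities are routine. For the upper bound, any $b \in B \subseteq S(v,m)$ satisfies $d(v,b) = m$, so the triangle inequality gives $d(v,w) \le d(v,b) + d(b,w) = m + d(b,w)$, and minimizing over $b \in B$ yields $d(v,w) \le m + \min_{b \in B} d(b,w)$. For the matching lower bound, take the exit vertex $x_i \in B$ produced above on a shortest $v$--$w$ path; since $P$ is geodesic, $d(v,w) = d(v,x_i) + d(x_i,w) = m + d(x_i,w) \ge m + \min_{b \in B} d(b,w)$. This proves the displayed formula. Running the identical argument with $v'$ in place of $v$ is legitimate because $B = \partial N(v',m) \subseteq S(v',m)$, so each $b \in B$ also satisfies $d(v',b) = m$, and because $w \notin N(v',m)$ as well; it gives $d(v',w) = m + \min_{b \in B} d(b,w)$, and the two expressions coincide.

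The only delicate point I anticipate is the boundary-crossing step: one must verify that a geodesic leaving $N(v,m)$ does so exactly at distance $m$ and through a vertex that is certified to lie in $\partial N(v,m)$, rather than merely in $S(v,m)$. Once this is secured, everything else reduces to symmetric triangle-inequality manipulations, and the shared data $(B,m)$ does the rest.
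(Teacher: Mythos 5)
Your proof is correct and follows essentially the same route as the paper's: both arguments hinge on the observation that a geodesic from $v$ to $w$ exits $N(v,m)$ through a vertex at distance exactly $m$ from $v$ which belongs to $\partial N(v,m)=\partial N(v',m)$, hence is also at distance $m$ from $v'$, after which the triangle inequality finishes the job (the paper runs this by contradiction, you package it as the symmetric formula $d(v,w)=m+\min_{b\in B}d(b,w)$). Your explicit verification that the exit vertex lies in $\partial N(v,m)$, and not merely in $S(v,m)$, is exactly the ``delicate point'' you flagged, and it fills in a step that the paper's proof leaves implicit.
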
 

\begin{proof} Consider any vertex $w$ in $G\setminus \Big( N(v,m)\cup N(v',m)\Big)$, and suppose $d(v,w)=p<q=d(v',w)$. Let $\gamma$ be a minimal path of length $p$ joining $v$ and $w$ and let $x$ be the vertex in $\gamma$ such that $d_G(v,x)=m$ and $d_G(x,w)=p-m$. Since $\partial N(v,m)= \partial N(v',m)$, then $d_G(v',x)=m$ and $d_G(v',w)\leq d_G(v',x)+d_G(x,w)=p<q$ leading to contradiction.
\end{proof}

Let $\mathcal{N}_m(G)\subset V\times V$ be the set of pairs of different vertices, $(v,v')$,  with equal $m$-boundary and $\mathcal{N}(G)=\cup_{m\in \NN}\mathcal{N}_m(G)$. For any $(v,v')\in \mathcal{N}_m(G)$ let $\eta_m(v,v')=|\{w\in N(v,m)\cup N(v',m) \, : \, d(v,w)\neq d(v',w)\}| \leq |N(v,m)\cup N(v',m)|$, $\eta_m (G)=\min_{(v,v')\in \mathcal{N}_m(G)} \eta_m(v,v')$ and $\eta (G)=\min_{m\in \NN} \eta_m (G)$.

\begin{theorem}\label{Th:diam} If $G$ is a connected graph such that
$\mathcal{N}_m(G) = \emptyset$ for every $m$, then $Dim(G) \geq \left\lfloor \frac{diam(G)-2}{4}\right\rfloor$.
\end{theorem}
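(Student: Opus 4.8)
The plan is to bound $|\mathcal{D}_G(x,y)|$ from below for \emph{every} pair of distinct vertices $x,y$ and then quote Theorem~\ref{th:distinctive}, which identifies $Dim(G)$ with $\min_{x\neq y}|\mathcal{D}_G(x,y)|$. Write $t=d(x,y)$ and split $V$ according to which of $x,y$ is closer: $A=\{z:d(x,z)<d(y,z)\}$, $B=\{z:d(y,z)<d(x,z)\}$ and the bisector $W=\{z:d(x,z)=d(y,z)\}$, so that $\mathcal{D}_G(x,y)=A\cup B$ and $\rho:=|\mathcal{D}_G(x,y)|=|A|+|B|$. The first observation is a localization estimate: any geodesic from $x$ to a vertex $z\in A$ stays inside $A$. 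Indeed, if $x=x_0,\dots,x_r=z$ is such a geodesic then $d(y,x_i)\ge d(y,z)-(r-i)\ge (r+1)-(r-i)=i+1>i=d(x,x_i)$, so $x_i\in A$. Hence $A$ contains an $x$-geodesic of length $d(x,z)$, giving $d(x,z)\le |A|-1$; that is, $A\subseteq N(x,|A|-1)$, and symmetrically $B\subseteq N(y,|B|-1)$.

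The core step is to show that these two small balls control the boundaries at all larger radii: for every $m\ge t+\rho$ one has $\partial N(x,m)=\partial N(y,m)$. Take $w\in\partial N(x,m)$ and let $w'$ be a neighbour of $w$ with $d(x,w')=m+1$. Since $d(x,w),d(x,w')>|A|-1$ and $A\subseteq N(x,|A|-1)$, neither $w$ nor $w'$ lies in $A$; and if either lay in $B\subseteq N(y,|B|-1)$ then by the triangle inequality $d(x,\cdot)\le t+|B|-1<m$, which is impossible. Hence $w,w'\in W$, so $d(y,w)=m$ and $d(y,w')=m+1$; thus $w\in S(y,m)$ has a neighbour outside $N(y,m)$ and $w\in\partial N(y,m)$. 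This gives $\partial N(x,m)\subseteq\partial N(y,m)$, and as the threshold $m\ge t+\rho$ is symmetric in $x,y$, equality follows.

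Now I would combine this with the hypothesis and the diameter. Fix a diametral pair $a,b$ with $d(a,b)=\diam(G)$; from $\diam(G)=d(a,b)\le d(x,a)+d(x,b)$ we get $\max_{z}d(x,z)\ge \diam(G)/2$, and likewise for $y$. Suppose, for contradiction, that $t+\rho<\diam(G)/2$. Then $m:=t+\rho$ is strictly below the eccentricities of both $x$ and $y$, so $\partial N(x,m)$ and $\partial N(y,m)$ are nonempty; by the previous step they are equal, whence $(x,y)\in\mathcal{N}_m(G)$, contradicting $\mathcal{N}_m(G)=\emptyset$. Therefore $t+\rho\ge \diam(G)/2$. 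Finally, the $x$--$y$ geodesic already supplies distinguishing vertices: its vertex at distance $i$ from $x$ satisfies $d(x,\cdot)=i$, $d(y,\cdot)=t-i$, hence lies in $\mathcal{D}_G(x,y)$ whenever $i\neq t-i$, giving $\rho\ge t$. Combining, $2\rho\ge t+\rho\ge \diam(G)/2$, so $\rho\ge \diam(G)/4$, which is at least $\lfloor(\diam(G)-2)/4\rfloor$. As this holds for every pair, Theorem~\ref{th:distinctive} yields the stated bound.

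The step I expect to be the main obstacle is the boundary-equality claim of the second paragraph: one must pin down the exact radius threshold at which the boundaries of $N(x,m)$ and $N(y,m)$ are forced to coincide, and in particular verify that \emph{both} a boundary vertex $w$ and its outward neighbour $w'$ land in the bisector $W$ (checking $w\in W$ alone does not place $w$ in $\partial N(y,m)$). The localization estimate $A\subseteq N(x,|A|-1)$ is precisely what makes this threshold linear in $\rho$, and hence drives the whole argument; the remaining ingredients ($\rho\ge t$ and the eccentricity bound $\ge \diam(G)/2$) are routine.
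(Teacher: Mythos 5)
Your proof is correct, but it takes a genuinely different route from the paper's. The paper argues radius by radius: for each $m\le \diam(G)/2-1$ at least one of $\partial N(v,m)$, $\partial N(v',m)$ is nonempty, so the hypothesis $\mathcal{N}_m(G)=\emptyset$ forces them to differ, and a vertex witnessing the difference (either $w$ itself or its outward neighbour $w'$) is a distinctive vertex at distance $m$ or $m+1$ from $v$; letting $m$ run over all admissible radii, and since each vertex can serve as witness for at most two radii, this yields the count $\left\lfloor \frac{\diam(G)-2}{4}\right\rfloor$. You instead argue by contradiction through a structural lemma absent from the paper: the sets $A$, $B$ of vertices strictly closer to $x$, resp.\ $y$, are trapped in balls of radius $|A|-1$, $|B|-1$ (your geodesic-localization estimate), hence for every $m\ge t+\rho$ the two boundaries coincide exactly; the hypothesis then forces $t+\rho\ge \diam(G)/2$, and together with the elementary bound $\rho\ge t$ this gives $\rho\ge \diam(G)/4$ for every pair. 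Your version buys a marginally stronger bound ($\rho\ge\lceil \diam(G)/4\rceil$ rather than $\lfloor(\diam(G)-2)/4\rfloor$) and entirely avoids the double-counting bookkeeping that the paper's one-sentence conclusion leaves implicit (where, moreover, the WLOG choice of which of $v,v'$ hosts the witness can flip as $m$ varies); the price is length, namely the extra localization lemma and the threshold computation. Both proofs ultimately rest on Theorem~\ref{th:distinctive} and on the fact that boundaries are nonempty below the eccentricity, which is at least $\diam(G)/2$.
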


\begin{proof} If $\mathcal{N}_m(G) = \emptyset$ for every $m$, given any pair of vertices $v,v'$ and any $m\in \NN$ such that either $\partial N(v,m)\neq \emptyset$ or $\partial N(v',m)\neq \emptyset$, $\partial N(v,m)\neq \partial N(v',m)$. Thus, we may assume that there is some $w\in \partial N(v,m)\setminus \partial N(v',m)$. Therefore, either $d(v',w)\neq m$ or there is a vertex $w'$ adjacent to $w$  such that $d(w',v)=m+1$ and $d(w',v')\neq m+1$.
Thus, for each $m\leq \frac{diam(G)}{2}-1$ there is a distinctive vertex of $v,v'$ at distance either $m$ or $m+1$ from $v$ and $Dim(G)\geq \left\lfloor \frac{diam(G)-2}{4}\right\rfloor$. 
\end{proof}

\begin{theorem}\label{Th:eta} If $G$ be a connected graph such that
$\mathcal{N}_m(G) \neq \emptyset$ for some $m$, then $Dim(G) \leq \eta(G)$.
\end{theorem}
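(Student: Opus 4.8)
The plan is to lean on the characterization $Dim(G)=\min_{x\neq y}|\mathcal{D}_G(x,y)|$ supplied by Theorem \ref{th:distinctive}, and to produce a single pair of vertices whose number of distinctive vertices is exactly $\eta(G)$. Since $\eta(G)=\min_{m}\eta_m(G)=\min_{m}\min_{(v,v')\in\mathcal{N}_m(G)}\eta_m(v,v')$ and $\mathcal{N}_m(G)\neq\emptyset$ for some $m$, I would first fix an integer $m$ and a pair $(v,v')\in\mathcal{N}_m(G)$ realizing this double minimum, so that $v,v'$ have equal $m$-boundary and $\eta_m(v,v')=\eta(G)$.

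The crucial step is to show that for this pair the whole distinctive set lives inside the union of the two balls, i.e.
\[
\mathcal{D}_G(v,v')\subseteq N(v,m)\cup N(v',m).
\]
This is exactly what Lemma \ref{l:boundary} gives: because $v,v'$ have equal $m$-boundary, every vertex $w\in G\setminus\big(N(v,m)\cup N(v',m)\big)$ satisfies $d(v,w)=d(v',w)$ and therefore fails to be a distinctive vertex. Consequently
\[
\mathcal{D}_G(v,v')=\{\,w\in N(v,m)\cup N(v',m)\, : \, d(v,w)\neq d(v',w)\,\},
\]
which is precisely the set whose cardinality defines $\eta_m(v,v')$. Hence $|\mathcal{D}_G(v,v')|=\eta_m(v,v')=\eta(G)$.

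Finally I would invoke Theorem \ref{th:distinctive} once more: since $Dim(G)$ is the minimum of $|\mathcal{D}_G(x,y)|$ over all pairs $x\neq y$, and the particular pair $(v,v')$ attains $|\mathcal{D}_G(v,v')|=\eta(G)$, we conclude $Dim(G)\leq\eta(G)$. I do not expect a genuine obstacle here; the one point requiring care is recognizing that Lemma \ref{l:boundary} does not merely bound but \emph{localizes} the distinctive vertices to $N(v,m)\cup N(v',m)$, which is what upgrades $\eta_m(v,v')$ from an a~priori unrelated count to an exact evaluation of $|\mathcal{D}_G(v,v')|$. Everything else is a matter of matching the definitions of $\eta_m$ and $\mathcal{D}_G$.
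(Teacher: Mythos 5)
Your proposal is correct and follows essentially the same route as the paper: choose a pair $(v,v')\in\mathcal{N}_m(G)$ realizing $\eta(G)$, use Lemma \ref{l:boundary} to conclude that $\mathcal{D}_G(v,v')$ is exactly the set of vertices in $N(v,m)\cup N(v',m)$ distinguishing $v$ and $v'$, and then apply Theorem \ref{th:distinctive}. The only difference is expository: you make explicit the point (implicit in the paper) that Lemma \ref{l:boundary} turns $\eta_m(v,v')$ into the exact value of $|\mathcal{D}_G(v,v')|$ rather than a mere bound.
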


\begin{proof} Consider $(v,v')\in \mathcal{N}_m(G)$ such that 
$|\{w\in N(v,m)\cup N(v',m) \, : \, d(v,w)\neq d(v',w)\}|=\eta(G)=k$ with $W=\{w_1,\dots,w_k\}$ the set of vertices $w \in N(v,m)\cup N(v',m)$ such that $d(v,w)\neq d(v',w)$. By Lemma \ref{l:boundary}, for every in $G\setminus \Big( N(v,m)\cup N(v',m)\Big)$, $d(v,w)=d(v',w)$. Therefore, $\mathcal{D}_G(v,v')=\{w_1,\dots,w_k\}$ and, by Theorem \ref{th:distinctive}, $Dim(G)\leq k$.
\end{proof}

\begin{theorem}\label{Th:eta2} If $G$ be a connected graph such that
$\mathcal{N}_m(G) \neq \emptyset$ for some $m$ and $\eta(G)\leq \left\lfloor \frac{diam(G)-2}{4}\right\rfloor$, then $Dim(G) = \eta(G)$.
\end{theorem}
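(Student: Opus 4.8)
The plan is to pair the upper bound already available from Theorem~\ref{Th:eta} with a matching lower bound. Since $\mathcal{N}_m(G)\neq\emptyset$ for some $m$, Theorem~\ref{Th:eta} gives $Dim(G)\leq\eta(G)$ for free, so the whole task reduces to proving $Dim(G)\geq\eta(G)$.

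To this end I would invoke Theorem~\ref{th:distinctive} and fix a pair of distinct vertices $(x,y)$ realizing the minimum, so that $Dim(G)=|\mathcal{D}_G(x,y)|$, and then split according to whether $(x,y)\in\mathcal{N}(G)$. In the first case, $(x,y)\in\mathcal{N}_m(G)$ for some $m$, and Lemma~\ref{l:boundary} forces every distinctive vertex of $x,y$ to lie inside $N(x,m)\cup N(y,m)$; hence $|\mathcal{D}_G(x,y)|=\eta_m(x,y)\geq\eta_m(G)\geq\eta(G)$, and we are done in this case without using the diameter hypothesis.

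The second case, $(x,y)\notin\mathcal{N}_m(G)$ for every $m$, is where the hypothesis $\eta(G)\leq\lfloor\frac{diam(G)-2}{4}\rfloor$ enters, and it is the step I expect to require the most care. Here I would rerun the counting argument of Theorem~\ref{Th:diam}, but for this single pair. First one checks that every vertex $v$ has eccentricity at least $diam(G)/2$: taking a diametral pair $a,b$, the triangle inequality gives $d(v,a)+d(v,b)\geq d(a,b)=diam(G)$, so $\max\{d(v,a),d(v,b)\}\geq diam(G)/2$. Consequently both $\partial N(x,m)$ and $\partial N(y,m)$ are nonempty whenever $m\leq diam(G)/2-1$, and since $(x,y)$ has no equal $m$-boundary these two boundaries must differ, producing (exactly as in Theorem~\ref{Th:diam}) a distinctive vertex of $x,y$ at distance $m$ or $m+1$ from $x$. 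Counting over the values $m=0,2,4,\dots$ so that the intervals $\{m,m+1\}$ stay disjoint yields $|\mathcal{D}_G(x,y)|\geq\lfloor\frac{diam(G)-2}{4}\rfloor\geq\eta(G)$, the final inequality being precisely the hypothesis.

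Combining the two cases, $Dim(G)=|\mathcal{D}_G(x,y)|\geq\eta(G)$, which together with Theorem~\ref{Th:eta} gives $Dim(G)=\eta(G)$. The main obstacle is not a new idea but the bookkeeping in the second case: one must make sure the distinctive vertices extracted for different values of $m$ are genuinely distinct and are counted as in Theorem~\ref{Th:diam}, and that the hypothesis on $diam(G)$ is exactly what is needed to bridge the gap down to $\eta(G)$; everything else is a direct appeal to the earlier results.
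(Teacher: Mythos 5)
Your proposal is correct and follows essentially the same route as the paper's own proof: split pairs according to whether they have an equal $m$-boundary for some $m$, bound the distinctive set by $\eta_m\geq\eta(G)$ in the first case, rerun the counting argument of Theorem~\ref{Th:diam} together with the hypothesis $\eta(G)\leq\left\lfloor \frac{diam(G)-2}{4}\right\rfloor$ in the second, and close with Theorem~\ref{th:distinctive} and Theorem~\ref{Th:eta}. The only difference is cosmetic: you fix a minimizing pair and fill in the eccentricity and disjoint-interval bookkeeping that the paper leaves implicit by citing the proof of Theorem~\ref{Th:diam}.
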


\begin{proof} Given any pair of vertices, $v,v'$, if there is some $m$ such that they have equal $m$-boundary, then $|\mathcal{D}_G(v,v')|\geq \eta_m(v,v')\geq \eta(G)$. If for every $m$, $v,v'$ do not have equal $m$-boundary then, as we saw in the proof of Theorem \ref{Th:diam}, $\mathcal{D}_G(v,v')\geq \left\lfloor \frac{diam(G)-2}{4}\right\rfloor$. Therefore, for every $v,v'\in V$, $|\mathcal{D}_G(v,v')|\geq \eta(G)$ and, by Theorem \ref{th:distinctive}, $ Dim(G) \geq \eta(G)$. Thus, by \ref{Th:eta}, $Dim(G) = \eta(G)$.
\end{proof}

A vertex separator set in a graph is a set of vertices that disconnects two vertices. 

\begin{definition} Given two different vertices $v,v'$ in a connected graph $G$, we say that their $m$-spheres have a \emph{common separating subset} if there is a set of vertices $S\subset S(v,m)\cap S(v',m)$ such that $S$ is a vertex separator in $G$ and there is a component of $G\setminus S$ not containing $v$ nor $v'$. 
\end{definition}

\begin{remark}\label{r:separating} If two different vertices $v,v'$ in a connected graph $G$ have equal $m$-boundary and $N(v,m)\cup N(v',m)\neq V(G)$, then $S=\partial N(v,m)= \partial N(v',m)$ is a common separating subset. 
\end{remark}

\begin{lemma}\label{l:boundary2} If given two vertices, $v,v'$, in a connected graph $G$ their $m$-spheres have a common separating subset $S$, then for every vertex $w$ in any component of $G\setminus S$ not containing $v$ or $v'$, $d(v,w)=d(v',w)$.
\end{lemma}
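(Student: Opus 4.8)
The plan is to exploit the fact that $S$ separates $w$ from both $v$ and $v'$, so that every path from $v$ (or from $v'$) to $w$ is forced to meet $S$, and then to use that $v$ and $v'$ are equidistant from every vertex of $S$ (namely at distance $m$).

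First I would fix $w$ in a component $C$ of $G\setminus S$ containing neither $v$ nor $v'$. Since $S\subset S(v,m)\cap S(v',m)$ and $d(v,v)=0$, necessarily $m\geq 1$, so $v,v'\notin S$; hence $v$ and $v'$ lie outside $C$ and outside $S$. Because consecutive vertices of a path are adjacent and distinct components of $G\setminus S$ are mutually nonadjacent, a path joining a vertex outside $C$ to a vertex of $C$ can pass between components only through $S$. Thus every path from $v$ to $w$, and every path from $v'$ to $w$, contains at least one vertex of $S$.

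The key step is the distance factorization through the separator. For any path $P$ from $v$ to $w$, choosing a vertex $s\in S$ lying on $P$ and splitting $P$ there shows that the length of $P$ is at least $d(v,s)+d(s,w)\geq \min_{s'\in S}\big(d(v,s')+d(s',w)\big)$; conversely, concatenating a shortest path from $v$ to a minimizing $s$ with a shortest path from $s$ to $w$ attains this value, so
\[
d(v,w)=\min_{s\in S}\big(d(v,s)+d(s,w)\big).
\]
Since $d(v,s)=m$ for every $s\in S$, this yields $d(v,w)=m+\min_{s\in S}d(s,w)$. Running the identical argument for $v'$ and using $d(v',s)=m$ for every $s\in S$ gives $d(v',w)=m+\min_{s\in S}d(s,w)$, the same quantity. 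Therefore $d(v,w)=d(v',w)$.

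The argument is essentially complete; the only point requiring care is the separation claim, namely that no path from $v$ or $v'$ to $w$ can avoid $S$. This is where I expect the one subtlety to lie, but it follows directly from $w$ lying in a component of $G\setminus S$ distinct from those containing $v$ and $v'$, together with the fact that, within $G\setminus S$, the component of $w$ is disconnected from the components of $v$ and $v'$.
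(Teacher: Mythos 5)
Your proof is correct and rests on exactly the same two ingredients as the paper's: every path from $v$ (or $v'$) to $w$ must cross the separator $S$, and every vertex of $S$ is at distance $m$ from both $v$ and $v'$. The only difference is presentational --- you package this as the direct identity $d(v,w)=\min_{s\in S}\bigl(d(v,s)+d(s,w)\bigr)=m+\min_{s\in S}d(s,w)$, whereas the paper argues by contradiction along a single geodesic --- so this is essentially the paper's argument in a cleaner, symmetric form.
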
 

\begin{proof} Consider any vertex $w$ in any component of $G\setminus S$ not containing $v$ or $v'$, and suppose $d(v,w)=p<q=d(v',w)$. Let $\gamma$ be a minimal path of length $p$ joining $v$ and $w$. Since $S\subset S(v,m)\cap S(v',m)$ and $S$ separates $w$ from $v$ and $v'$ then there is some $x\in S\cap \g$ such that $d_G(v,x)=m$ and $d_G(x,w)=p-m$. Therefore, $d_G(v',x)=m$ and $d_G(v',w)\leq d_G(v',x)+d_G(x,w)=p<q$ leading to contradiction.
\end{proof}

\begin{lemma}\label{l:union} Given two vertices, $v,v'$, in a connected graph and two common separating subsets $S_1,S_2$ in their $m$-spheres, $S_1\cup S_2$ is a common separating subset.
\end{lemma}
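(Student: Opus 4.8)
The plan is to verify the two requirements of a common separating subset for $S_1\cup S_2$: first, that $S_1\cup S_2\subset S(v,m)\cap S(v',m)$, which is immediate since both $S_1$ and $S_2$ lie in this intersection; and second, that there is a component of $G\setminus(S_1\cup S_2)$ containing neither $v$ nor $v'$ (which also forces $S_1\cup S_2$ to be a genuine vertex separator). To produce such a component I would recycle the one already given by $S_1$: fix a component $C_1$ of $G\setminus S_1$ that contains neither $v$ nor $v'$, whose existence is part of the hypothesis that $S_1$ is a common separating subset.

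The key step is to show that $C_1$ is untouched by the removal of $S_2$, i.e. that $C_1\cap S_2=\emptyset$. For this I would argue that every $w\in C_1$ satisfies $d(v,w)>m$ (and symmetrically $d(v',w)>m$). Indeed, take a shortest path $\g$ from $v$ to $w$; since $v$ and $w$ lie in different components of $G\setminus S_1$, the separator $S_1$ meets $\g$ at some vertex $x\in S_1\subset S(v,m)$, so that $d(v,x)=m$. As $\g$ is geodesic, $d(v,w)=d(v,x)+d(x,w)=m+d(x,w)$, and $d(x,w)\geq 1$ because $x\in S_1$ and $w\in C_1$ are distinct; hence $d(v,w)\geq m+1$. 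Since $S_2\subset S(v,m)$ consists of vertices at distance exactly $m$ from $v$, no vertex of $C_1$ can belong to $S_2$, giving $C_1\cap S_2=\emptyset$.

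It then remains to see that $C_1$ is, in fact, an entire component of $G\setminus(S_1\cup S_2)$. Because $C_1$ is a component of $G\setminus S_1$, every vertex of $G$ adjacent to $C_1$ but lying outside $C_1$ belongs to $S_1$, hence to $S_1\cup S_2$; and by construction $C_1$ is disjoint from $S_1$ and, by the previous step, from $S_2$. Thus $C_1$ is a nonempty connected subset of $G\setminus(S_1\cup S_2)$ with no neighbours outside itself, so it coincides with exactly one component of $G\setminus(S_1\cup S_2)$, and it contains neither $v$ nor $v'$. Consequently $S_1\cup S_2$ separates $v$ (and $v'$) from any vertex of $C_1$ and is a common separating subset, as required. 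The only genuine obstacle is the disjointness $C_1\cap S_2=\emptyset$; everything else is bookkeeping, and that disjointness is precisely what the distance computation above secures.
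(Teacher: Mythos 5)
Your proof is correct and follows essentially the same route as the paper's: both reuse the component $C_1$ of $G\setminus S_1$ avoiding $v,v'$, observe that every vertex there is at distance greater than $m$ from $v$ and $v'$ (so $C_1$ is disjoint from $S_2\subset S(v,m)\cap S(v',m)$), and conclude that it yields a component of $G\setminus(S_1\cup S_2)$ containing neither $v$ nor $v'$. The only difference is that you spell out the geodesic argument for the distance bound and verify that $C_1$ survives as an entire component, details the paper leaves implicit.
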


\begin{proof} It is immediate to see that $S_1\cup S_2$ is a vertex separator contained in $S(v,m)\cap S(v',m)$. Now, let $w_1$ be any vertex in a component of $G\setminus S_1$ not containing $v$ or $v'$. Then, $d(w_1,v),d(w_1,v')>m$ and $w_1$ is contained in some component of $G\setminus (S_1\cup S_2)$ not containing $v$ or $v'$. 
\end{proof}

Let $\mathcal{P}_m(G)\subset V\times V$ be the set of pairs of different  vertices, $(v,v')$,  with some common separating component in their $m$-spheres and $\mathcal{P}(G)=\cup_{m\in \NN}\mathcal{P}_m(G)$. Given two vertices, $v,v'\in \mathcal{P}_m(G)$ let us denote $S_m(v,v')$, $m\in \NN$, the union of common separating subsets in their $m$-spheres and $C_m^{j}$, $j\in J$, the components of $G\setminus S_m(v,v')$ not containing $v$ or $v'$.  For any $(v,v')\in \mathcal{P}_m(G)$ let $\mu_m(v,v')=|\{w\notin \cup_{j}  C_m^{j}\, : \, d(v,w)\neq d(v',w)\}|$, $\mu_m (G)=\min_{(v,v')\in \mathcal{P}_m(G)} \mu_m(v,v')$ and $\mu (G)=\min_{m\in \NN} \mu_m (G)$.

\begin{definition} Given two vertices, $v,v'$, in a connected graph $G$, $ S_m (v,v') $ is critical if $ \mu_m(v,v') = \mu (G) $ and $ m = \min \{ k\in\NN : \, \mu_k(v,v') = \mu (G) \} $.
\end{definition}

\begin{proposition}\label{p:eta_mu} Given a connected graph $G$, if there exist  $m\in \NN$ and $v,v'\in \mathcal{N}_m(G)$  such that $\eta(G)=\eta_m(v,v')$ with $N_m(v)\cup N_m(v')\neq V(G)$, then $\eta(G)\geq \mu (G)$. 
\end{proposition}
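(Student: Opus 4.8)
The plan is to show that, for the distinguished pair $(v,v')$, both $\eta_m(v,v')$ and $\mu_m(v,v')$ coincide with $|\mathcal{D}_G(v,v')|$; the inequality $\eta(G)\geq\mu(G)$ will then follow because $\eta(G)=\eta_m(v,v')$ by hypothesis and $\mu(G)\leq\mu_m(v,v')$ by definition of $\mu(G)$ as a minimum. First I would verify that $(v,v')$ actually lies in $\mathcal{P}_m(G)$, so that $\mu_m(v,v')$ is defined and dominates $\mu(G)$. This is immediate from Remark \ref{r:separating}: since $v,v'$ have equal $m$-boundary and $N(v,m)\cup N(v',m)\neq V(G)$, the set $S=\partial N(v,m)=\partial N(v',m)$ is a common separating subset of their $m$-spheres, so $(v,v')\in\mathcal{P}_m(G)$ and $S\subseteq S_m(v,v')$. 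In particular $\mu(G)\leq\mu_m(v,v')$.

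Next I would locate the distinctive vertices. By Lemma \ref{l:boundary}, every $w\in G\setminus(N(v,m)\cup N(v',m))$ satisfies $d(v,w)=d(v',w)$, so $\mathcal{D}_G(v,v')\subseteq N(v,m)\cup N(v',m)$. Since $\eta_m(v,v')$ counts precisely the distinctive vertices lying in $N(v,m)\cup N(v',m)$, this containment gives $\eta_m(v,v')=|\mathcal{D}_G(v,v')|$.

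The key step is to show that $\mu_m(v,v')$ counts the same set. For this I would argue that the components $C_m^j$ of $G\setminus S_m(v,v')$ not containing $v$ or $v'$ are disjoint from $N(v,m)\cup N(v',m)$. Indeed, $S_m(v,v')\subseteq S(v,m)\cap S(v',m)$ (this is the union statement underlying Lemma \ref{l:union}), so every vertex of $S_m(v,v')$ lies at distance exactly $m$ from both $v$ and $v'$; because any path from $v$ (resp.\ $v'$) to a vertex $w\in C_m^j$ must cross the separator $S_m(v,v')$, we obtain $d(v,w)>m$ and $d(v',w)>m$, that is $w\notin N(v,m)\cup N(v',m)$. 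Hence $\mathcal{D}_G(v,v')$ is disjoint from $\cup_j C_m^j$, so none of its elements are discarded when passing from $N(v,m)\cup N(v',m)$ to the complement of $\cup_j C_m^j$, and $\mu_m(v,v')=|\mathcal{D}_G(v,v')|$ as well.

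Combining these, $\eta(G)=\eta_m(v,v')=|\mathcal{D}_G(v,v')|=\mu_m(v,v')\geq\mu(G)$, which is the claim. I expect the only delicate point to be the disjointness $C_m^j\cap(N(v,m)\cup N(v',m))=\emptyset$: everything rests on the containment $S_m(v,v')\subseteq S(v,m)\cap S(v',m)$ forcing every vertex beyond the separator to sit strictly outside both $m$-balls. Once that is established, both counts collapse onto the single set $\mathcal{D}_G(v,v')$ and the inequality is automatic.
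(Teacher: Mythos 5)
Your proof is correct and follows essentially the same route as the paper's: invoke Remark \ref{r:separating} to place $(v,v')$ in $\mathcal{P}_m(G)$, compare $\eta_m(v,v')$ with $\mu_m(v,v')$, and finish with the minimality of $\mu(G)$. The paper dismisses the middle comparison as ``immediate'' ($\eta_m(v,v')\geq\mu_m(v,v')$ suffices), whereas you carefully prove the sharper equality $\eta_m(v,v')=\mu_m(v,v')=|\mathcal{D}_G(v,v')|$ via the disjointness of the components $C_m^j$ from $N(v,m)\cup N(v',m)$ --- a valid filling-in of the same argument, not a different one.
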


\begin{proof} By Remark \ref{r:separating} it is immediate to check that $(v,v')\in \mathcal{P}_m(G)$ and  $\eta(G)=\eta_m(v,v') \geq \mu_m(v,v')\geq \mu_m (G)\geq \mu(G)$.
\end{proof}

\begin{corollary}  Given a connected graph $G$, if there exist  $m\in \NN$ and $v,v'\in \mathcal{N}_m(G)$  such that $\eta(G)=\eta_m(v,v')$ with $4m<diam(G)$, then $\eta(G)\geq \mu (G)$.
\end{corollary}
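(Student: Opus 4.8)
The plan is to obtain the corollary as a direct consequence of Proposition \ref{p:eta_mu}: it suffices to verify that the extra hypothesis $N(v,m)\cup N(v',m)\neq V(G)$ appearing there is forced by the assumption $4m<diam(G)$. I would argue by contradiction, assuming $N(v,m)\cup N(v',m)= V(G)$, and derive the upper bound $diam(G)\le 4m$, which contradicts the hypothesis.

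The first step is to control the distance $d(v,v')$. Since $(v,v')\in \mathcal{N}_m(G)$, the two vertices have equal $m$-boundary, so $\partial N(v,m)=\partial N(v',m)\neq \emptyset$. Choosing any $x$ in this common set and using that $\partial N(\cdot,m)\subset S(\cdot,m)$, we get $d(v,x)=d(v',x)=m$, whence $d(v,v')\le 2m$ by the triangle inequality. This is the only place where the nonemptiness built into the definition of equal $m$-boundary is used, and it is the crux of the argument.

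The second step is to take a pair $a,b$ realizing the diameter, $d(a,b)=diam(G)$. Under the contradiction hypothesis each of $a,b$ lies in $N(v,m)$ or in $N(v',m)$. Writing $a$ within distance $m$ of a center $c\in\{v,v'\}$ and $b$ within distance $m$ of a center $c'\in\{v,v'\}$, the triangle inequality yields $d(a,b)\le d(a,c)+d(c,c')+d(c',b)\le m+d(c,c')+m$. Since $d(c,c')$ is either $0$ (when $c=c'$) or $d(v,v')\le 2m$, in every case $diam(G)=d(a,b)\le 2m+2m=4m$, contradicting $4m<diam(G)$.

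Hence $N(v,m)\cup N(v',m)\neq V(G)$, and Proposition \ref{p:eta_mu} applies verbatim to conclude that $\eta(G)\ge \mu(G)$. I do not anticipate any genuine obstacle beyond the estimate $d(v,v')\le 2m$; the remainder is a routine computation with the triangle inequality and the diameter-realizing pair.
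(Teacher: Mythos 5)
Your proof is correct and follows exactly the route the paper intends: the corollary is stated as an immediate consequence of Proposition \ref{p:eta_mu}, with the implicit observation that $4m<diam(G)$ forces $N(v,m)\cup N(v',m)\neq V(G)$. Your argument—using a common boundary point to get $d(v,v')\le 2m$ and then the triangle inequality on a diameter-realizing pair to reach the contradiction $diam(G)\le 4m$—is precisely the missing verification, so you have supplied the intended proof in full detail.
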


\begin{theorem}\label{Th:mu} Let $G$ be a connected graph such that
$\mathcal{P}(G) \neq \emptyset$, then $Dim(G) \leq \mu(G)$.
\end{theorem}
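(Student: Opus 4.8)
The plan is to mirror the argument used for Theorem~\ref{Th:eta}, replacing the boundary sets by separating subsets and closing with Theorem~\ref{th:distinctive}. Since $\mathcal{P}(G)\neq\emptyset$ and $\mu(G)=\min_{m}\min_{(v,v')\in\mathcal{P}_m(G)}\mu_m(v,v')$, I would first fix an index $m$ and a pair $(v,v')\in\mathcal{P}_m(G)$ realizing the minimum, so that $\mu_m(v,v')=\mu(G)$. The task then reduces to showing that this one pair satisfies $|\mathcal{D}_G(v,v')|\leq\mu(G)$, because Theorem~\ref{th:distinctive} yields $Dim(G)=\min_{x\neq y}|\mathcal{D}_G(x,y)|\leq|\mathcal{D}_G(v,v')|$.

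Next I would argue that the distinctive vertices of $v,v'$ avoid all the components $C_m^{j}$. By definition $S_m(v,v')$ is the union of all common separating subsets of $S(v,m)\cap S(v',m)$; as these subsets form a finite collection (each being contained in the finite set $S(v,m)\cap S(v',m)$), Lemma~\ref{l:union} applied finitely many times shows that $S_m(v,v')$ is itself a common separating subset. Lemma~\ref{l:boundary2} then applies to $S_m(v,v')$ directly and gives $d(v,w)=d(v',w)$ for every vertex $w$ in a component of $G\setminus S_m(v,v')$ containing neither $v$ nor $v'$, i.e. for every $w\in\cup_{j}C_m^{j}$. Hence no vertex of $\cup_{j}C_m^{j}$ is distinctive.

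This pins down the distinctive set exactly. Since every $w$ with $d(v,w)\neq d(v',w)$ must lie outside $\cup_{j}C_m^{j}$, we obtain $\mathcal{D}_G(v,v')=\{w\notin\cup_{j}C_m^{j} : d(v,w)\neq d(v',w)\}$, whose cardinality is precisely $\mu_m(v,v')=\mu(G)$. Substituting into Theorem~\ref{th:distinctive} gives $Dim(G)\leq\mu(G)$, as claimed.

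The minimisation step and the final appeal to Theorem~\ref{th:distinctive} are routine; the step demanding care is verifying that the union $S_m(v,v')$ is genuinely a common separating subset, so that Lemma~\ref{l:boundary2} can be invoked for all components $C_m^{j}$ simultaneously. One must check that enlarging a separating subset to the full union does not destroy the existence of a component free of $v$ and $v'$ --- exactly the content of Lemma~\ref{l:union} --- and that passing to the (possibly finer) components of $G\setminus S_m(v,v')$ still removes only vertices equidistant from $v$ and $v'$. This is the sole place where the structural lemmas do the real work.
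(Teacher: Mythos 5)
Your proposal is correct and follows essentially the same route as the paper's proof: pick a pair $(v,v')\in\mathcal{P}_m(G)$ realizing $\mu(G)$, use Lemma~\ref{l:boundary2} to show no vertex of $\cup_{j}C_m^{j}$ is distinctive, conclude $\mathcal{D}_G(v,v')$ has cardinality $\mu(G)$, and finish with Theorem~\ref{th:distinctive}. The only difference is that you make explicit the step the paper leaves implicit, namely invoking Lemma~\ref{l:union} to verify that the union $S_m(v,v')$ is itself a common separating subset before applying Lemma~\ref{l:boundary2} to it, which is a welcome clarification rather than a deviation.
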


\begin{proof} Consider $(v,v')\in \mathcal{P}_m(G)$ such that 
$|\{w\notin \cup_{j}  C_m^{j}\, : \, d(v,w)\neq d(v',w)\}|=\mu(G)=k$ with $W=\{w_1,\dots,w_k\}$ the set of vertices $w \in G\setminus \Big(\cup_{j}  C_m^{j}\Big)$ such that $d(v,w)\neq d(v',w)$. By Lemma \ref{l:boundary2}, for every $w \in \cup_{j}  C_m^{j}$, $d(v,w)=d(v',w)$. Therefore, $\mathcal{D}_G(v,v')=\{w_1,\dots,w_k\}$ and, by Theorem \ref{th:distinctive}, $Dim(G)\leq k$.
\end{proof}

\begin{proposition}  Given a connected graph $G$, if there exist  $m\in \NN$ and $v,v'\in \mathcal{N}_m(G)$ such that 
$\eta_m(v,v')=\eta(G)$ with $N_m(v)\cup N_m(v')\neq V(G)$ and $4\eta(G)+2<diam(G)$, then $Dim(G)= \mu (G)$.
\end{proposition}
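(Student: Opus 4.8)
The plan is to squeeze $Dim(G)$, $\eta(G)$ and $\mu(G)$ together by combining the three inequalities already at our disposal, since the hypotheses of this proposition are tailored so that each of the earlier results becomes applicable.

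First I would observe that the quantitative hypothesis $4\eta(G)+2<diam(G)$ is exactly what activates Theorem \ref{Th:eta2}. Rewriting it as $\eta(G)<\frac{diam(G)-2}{4}$ and using that $\eta(G)$ is a nonnegative integer, one obtains $\eta(G)\leq \left\lfloor \frac{diam(G)-2}{4}\right\rfloor$ (checking separately the cases in which $\frac{diam(G)-2}{4}$ is or is not an integer). Since the assumption $v,v'\in \mathcal{N}_m(G)$ already guarantees $\mathcal{N}_m(G)\neq\emptyset$, Theorem \ref{Th:eta2} applies and yields $Dim(G)=\eta(G)$.

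Next I would note that the remaining hypotheses, namely the existence of $m$ and $v,v'\in\mathcal{N}_m(G)$ with $\eta_m(v,v')=\eta(G)$ and $N_m(v)\cup N_m(v')\neq V(G)$, are precisely the hypotheses of Proposition \ref{p:eta_mu}. Invoking it gives $\eta(G)\geq \mu(G)$. Furthermore, the proof of that proposition establishes (through Remark \ref{r:separating}) that $(v,v')\in \mathcal{P}_m(G)$, so in particular $\mathcal{P}(G)\neq\emptyset$ and Theorem \ref{Th:mu} becomes available, delivering $Dim(G)\leq \mu(G)$.

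Finally I would assemble the chain
\[
\mu(G)\;\geq\; Dim(G)\;=\;\eta(G)\;\geq\;\mu(G),
\]
in which the first inequality is Theorem \ref{Th:mu}, the equality is Theorem \ref{Th:eta2}, and the last inequality is Proposition \ref{p:eta_mu}. This forces equality throughout and hence $Dim(G)=\mu(G)$. I expect no serious obstacle here: the argument is a direct chaining of results already proved, and the only step demanding genuine care is the very first one, namely the passage from the strict inequality $4\eta(G)+2<diam(G)$ to the floor bound $\eta(G)\leq\left\lfloor \frac{diam(G)-2}{4}\right\rfloor$ that Theorem \ref{Th:eta2} requires.
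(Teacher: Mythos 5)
Your proof is correct and follows exactly the paper's own route: Remark \ref{r:separating} to get $\mathcal{P}(G)\neq\emptyset$, then combining Theorem \ref{Th:eta2}, Proposition \ref{p:eta_mu} and Theorem \ref{Th:mu} into the chain $\mu(G)\geq Dim(G)=\eta(G)\geq\mu(G)$. The only difference is that you spell out the passage from $4\eta(G)+2<diam(G)$ to $\eta(G)\leq\left\lfloor \frac{diam(G)-2}{4}\right\rfloor$, a detail the paper leaves implicit.
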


\begin{proof} By Remark \ref{r:separating}, $\mathcal{P}(G)\neq \emptyset$. Then, the result follows from Theorem \ref{Th:eta2}, Proposition \ref{p:eta_mu} and Theorem \ref{Th:mu}.
\end{proof}

\begin{corollary}  Given a connected graph $G$, if $\mathcal{N}_m(G)\neq \emptyset$ and $ \max\{4m,4\eta(G)+2\}<diam(G)$, then $Dim(G)= \mu (G)$.
\end{corollary}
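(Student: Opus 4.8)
The plan is to deduce the statement from the preceding Proposition, whose conclusion is already exactly $Dim(G)=\mu(G)$; everything reduces to checking that its hypotheses hold under the weaker assumptions of the corollary. Since $\mathcal{N}_m(G)\neq\emptyset$, the invariant $\eta(G)$ is a well-defined finite minimum, so I would fix a level $m$ as in the hypothesis together with a pair $v,v'\in\mathcal{N}_m(G)$ realizing $\eta_m(v,v')=\eta(G)$. The bound $4\eta(G)+2<diam(G)$ is handed to us directly, so the single remaining task is to verify $N(v,m)\cup N(v',m)\neq V(G)$; once this holds, the preceding Proposition applies verbatim.

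The heart of the argument is to turn $4m<diam(G)$ into the noncovering statement $N(v,m)\cup N(v',m)\neq V(G)$. Because $v$ and $v'$ have equal $m$-boundary, the common set $\partial N(v,m)=\partial N(v',m)$ is nonempty; choosing any $z$ in it gives $d(v,z)=d(v',z)=m$ and hence $d(v,v')\leq 2m$. I would then argue by contradiction: if $N(v,m)\cup N(v',m)=V(G)$, every vertex lies within distance $m$ of $v$ or of $v'$, so for arbitrary $x,y\in V(G)$ either both lie in the same ball, whence $d(x,y)\leq 2m$, or they lie in different balls, whence $d(x,y)\leq m+d(v,v')+m\leq 4m$; in either case $d(x,y)\leq 4m$. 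This forces $diam(G)\leq 4m$, contradicting $4m<diam(G)$.

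With $N(v,m)\cup N(v',m)\neq V(G)$ and $4\eta(G)+2<diam(G)$ both secured for the chosen $m,v,v'$, invoking the preceding Proposition finishes the proof and yields $Dim(G)=\mu(G)$. The only point worth flagging is the bookkeeping on the quantifier: one must read the hypothesis as providing a level $m$ at which the global minimum $\eta(G)$ is attained, since it is precisely this $m$—rather than an arbitrary index with $\mathcal{N}_m(G)\neq\emptyset$—that must simultaneously satisfy $4m<diam(G)$ and feed the diameter estimate above. Beyond that selection, the chain already assembled inside the Proposition (Theorem~\ref{Th:eta2} giving $Dim(G)=\eta(G)$, Proposition~\ref{p:eta_mu} giving $\eta(G)\geq\mu(G)$, and Theorem~\ref{Th:mu} giving $Dim(G)\leq\mu(G)$) does all the remaining work, so no further estimates are required.
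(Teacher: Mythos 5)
Your proposal is correct and is precisely the derivation the paper intends: the corollary is printed without proof as an immediate consequence of the preceding Proposition, and the one substantive step---showing that $4m<diam(G)$ forces $N(v,m)\cup N(v',m)\neq V(G)$, via $d(v,v')\leq 2m$ from a common boundary vertex together with the triangle inequality---is exactly what you supply. Your quantifier caveat is also warranted: as in the corollary following Proposition \ref{p:eta_mu}, which imposes $\eta(G)=\eta_m(v,v')$ explicitly, the hypothesis here must be read as saying that $\eta(G)$ is attained at the given level $m$, since an arbitrary pair in $\mathcal{N}_m(G)$ need not realize $\eta(G)$ and could not be fed into the Proposition.
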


By Proposition \ref{p:eta_mu} it is immediate to see that Theorem \ref{Th:mu} improves Theorem \ref{Th:eta} if there exist  $m\in \NN$ and $v,v'\in \mathcal{N}_m(G)$  such that $\eta(G)=\eta_m(v,v')$ with $N_m(v)\cup N_m(v')\neq V(G)$. Moreover, let us check that Theorem \ref{Th:mu} also improves Theorem \ref{Th:sigma}. 

\begin{remark}\label{r:cut} Given a connected graph $G$, if there exist  $m\in \NN$ and $v,v'\in V(G)$ such that there is a cut vertex $c$ in $S(v,m)\cap S(v',m)$, then $\{c\}$ is a common separating subset if there is some component $C$ of $G\setminus \{c\}$ not containing $v$ or $v'$.
\end{remark}

\begin{proposition} Let $G$ be a connected graph such that
$\mathcal{M}(G) \neq \emptyset$. Then, $\mathcal{P}(G)\neq \emptyset$ and $\mu(G)\leq \varsigma(G)$.
\end{proposition}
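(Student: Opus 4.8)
The plan is to produce, for a well-chosen scale $m$ and a well-chosen pair $(v,v')$, simultaneously the membership $(v,v')\in\mathcal{P}_m(G)$ and the exact value of $\mu_m(v,v')$, and then to read off both conclusions from the definition $\mu(G)=\min_m\min_{(v,v')\in\mathcal{P}_m(G)}\mu_m(v,v')$.

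First I would fix $w\in\mathcal{M}(G)$ realizing $\varsigma(G)=\varsigma(w)$ and two terminal vertices $u_j,u_r$ of $w$ with $\varsigma(u_j,u_r)=\varsigma(w)$, and set $\ell_1=d_G(u_j,w)$, $\ell_2=d_G(u_r,w)$, so that $\ell_1+\ell_2=\varsigma(G)$. The key structural observation is that every internal vertex of $P(u_j,w)$ and of $P(u_r,w)$ has degree $2$: an internal vertex of degree $1$ would be an end-vertex, and one of degree at least $3$ would be a major vertex closer to $u_j$ (resp. $u_r$) than $w$, contradicting that $w$ is the nearest major vertex. Hence each of these two paths is a pendant path attached to the rest of $G$ only through $w$; since $\deg(w)\ge 3$, the vertex $w$ is a cut vertex and has at least one neighbour lying on neither path.

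Next I would choose, for any fixed $m$ with $1\le m\le\min\{\ell_1,\ell_2\}$, the vertex $v\in P(u_j,w)$ and the vertex $v'\in P(u_r,w)$ at distance $m$ from $w$. Then $d_G(v,w)=d_G(v',w)=m$, so $w\in S(v,m)\cap S(v',m)$, and by the previous paragraph there is a component of $G\setminus\{w\}$ containing neither $v$ nor $v'$, namely any component reached through a neighbour of $w$ off the two legs. By Remark \ref{r:cut}, $\{w\}$ is then a common separating subset of the $m$-spheres of $v$ and $v'$, so $(v,v')\in\mathcal{P}_m(G)$ and in particular $\mathcal{P}(G)\neq\emptyset$, settling the first claim.

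It then remains to evaluate $\mu_m(v,v')$. By Lemma \ref{l:union} the union $S_m(v,v')$ of the common separating subsets is again a common separating subset, so Lemma \ref{l:boundary2} makes every vertex of a component of $G\setminus S_m(v,v')$ avoiding both $v$ and $v'$ non-distinctive; consequently $\mu_m(v,v')=|\mathcal{D}_G(v,v')|$, and the task reduces to computing the distinctive set. A direct distance computation along the two pendant paths shows that a vertex $z$ of $P(u_j,w)$ satisfies $d_G(v,z)=|m-d_G(w,z)|$ and $d_G(v',z)=m+d_G(w,z)$, so $z$ is distinctive precisely when $z\neq w$; the symmetric statement holds on $P(u_r,w)$, while $w$ and every vertex of the rest of $G$ (which $w$ separates from both $v$ and $v'$) are non-distinctive. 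Hence $\mathcal{D}_G(v,v')$ consists exactly of the $\ell_1+\ell_2$ vertices of the two legs other than $w$, giving $\mu_m(v,v')=\ell_1+\ell_2=\varsigma(G)$ and therefore $\mu(G)\le\varsigma(G)$. I expect the main obstacle to be the bookkeeping in this last step: one must check that replacing $\{w\}$ by the full union $S_m(v,v')$ neither detaches leg vertices into far components nor creates new distinctive vertices, which is precisely what the containment $S_m(v,v')\subseteq S(v,m)\cap S(v',m)$ (equidistance, hence non-distinctiveness, on $S_m$) together with Lemma \ref{l:boundary2} guarantees.
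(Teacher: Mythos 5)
Your proof is correct and is essentially the paper's own argument: the paper's proof is exactly your construction specialized to $m=1$, taking the two neighbours $w_i,w_j$ of $w$ on the legs toward $u_i,u_j$, invoking Remark \ref{r:cut} to get the common separating subset $\{w\}$, and computing the distinctive set as the two legs minus $w$, of cardinality $\varsigma(G)$. Your extra generality (arbitrary $m\le\min\{\ell_1,\ell_2\}$) and the closing remark that all distinctive vertices lie outside the far components, so that $\mu_m(v,v')=|\mathcal{D}_G(v,v')|$, are sound but do not change the approach.
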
 

\begin{proof} Consider any vertex $w\in \mathcal{M}(G)$ such that $\varsigma (w) = \varsigma(G)=k$ and let $u_i,u_j$ two terminal vertices of $w$ such that $\varsigma(u_i ,u_j)=k$. Consider two vertices $w_i,w_j$ adjacent to $w$ such that $w_i$ is contained in the path $[u_i,w]$ and $w_j$ is contained in the path $[u_j,w]$. Notice that $deg(w_i),deg(w_j)\leq 2$. Then, by Remark \ref{r:cut} and since $deg(w)\geq 3$, it is readily seen that $w$ is a common separating subset of $S(w_i,1)$ and $S(w_j,1)$. Also, since $u_i,u_j$ are terminal vertices of $w$, the union of the components of $G\setminus \{w\}$ containing $u_i$ or $u_j$ is exactly the path $P(u_j,w,u_r)$ and $\mu_1(w_i,w_j)=k\geq \mu(G)$.
\end{proof}

Also, another lower bound for  $Dim(G)$ can be given as follows:

Let $A(G)$ be the length of the shortest maximal (i.e. not contained in a longer) geodesic in 
$G$. 

For any two vertices, $v,w$, let $\g(v,w)$ be any maximal geodesic containing $v$ and $w$  and let us denote its length by $|\g(v,w)|$. Then, 
$$A(G)=\inf_{v\neq w}|\g(v,w)|.$$

\begin{proposition}\label{p:A(G)} If $G$ be a connected graph then 
\[A(G)\leq Dim(G).\]
\end{proposition}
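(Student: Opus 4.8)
The plan is to reduce the statement to Theorem~\ref{th:distinctive}, which asserts $Dim(G)=\min_{x\neq y}|\mathcal{D}_G(x,y)|$. It therefore suffices to prove that $|\mathcal{D}_G(x,y)|\geq A(G)$ for \emph{every} pair of distinct vertices $x,y$; taking the minimum over all such pairs then yields $Dim(G)\geq A(G)$, which is the desired inequality.

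So I would fix an arbitrary pair $x\neq y$. First I would choose a geodesic joining $x$ and $y$ and extend it to a maximal geodesic $\g$ containing both $x$ and $y$. Such an extension exists because $G$ is finite: the process of enlarging a geodesic that is properly contained in a longer geodesic must terminate, and the terminal object is by definition a maximal geodesic. By the definition of $A(G)$ as the infimum of the lengths $|\g(v,w)|$ of maximal geodesics, the maximal geodesic we obtain satisfies $|\g(x,y)|=L\geq A(G)$.

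Next I would exploit that distances are additive along a geodesic. Writing the vertices of $\g$ in order as $v_0,v_1,\dots,v_L$ with $x=v_i$ and $y=v_j$, the subpath between any two of its vertices is itself a shortest path, so $d_G(v_s,v_t)=|s-t|$ for all $s,t$. In particular $d_G(x,v_t)=|i-t|$ and $d_G(y,v_t)=|j-t|$, whence $v_t$ is a distinctive vertex of the pair $x,y$ precisely when $|i-t|\neq|j-t|$. Since $i\neq j$, this equality can hold only for $t=(i+j)/2$, so at most one vertex of $\g$ (the midpoint, when $i+j$ is even) fails to be distinctive.

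Consequently at least $L$ of the $L+1$ vertices of $\g$ lie in $\mathcal{D}_G(x,y)$, giving $|\mathcal{D}_G(x,y)|\geq L\geq A(G)$, as required. The only points demanding any care are the existence of a maximal geodesic through $x$ and $y$ (guaranteed by the finiteness of $G$) and the verification that a geodesic realizes the distances $|s-t|$ between its vertices; once these are in place the count of non-distinctive vertices is immediate, so I expect no genuine obstacle beyond setting up the geodesic correctly.
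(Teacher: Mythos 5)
Your proof is correct and follows essentially the same route as the paper's: both invoke Theorem~\ref{th:distinctive} and count that along a maximal geodesic through the pair $x,y$ at most one vertex (the midpoint, when $d(x,y)$ is even) can be equidistant from $x$ and $y$, so at least $|\g(x,y)|\geq A(G)$ vertices are distinctive. Your write-up merely makes explicit two details the paper leaves implicit (the extension of a geodesic through $x,y$ to a maximal one, and the additivity of distances along a geodesic), which is a sound elaboration rather than a different argument.
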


\begin{proof} For any pair of vertices $v,w$ there is at most one vertex $x$ in $\g(v,w)$ such that $d(x,v)=d(x,w)$ which is the middle point between $v$ and $w$ if $d(v,w)$ is even, and at least $|\g(v,w)|$ vertices $y_j$ such that $d(y_j,v)\neq d(y_j,w)$. Therefore, by Theorem 
\ref{th:distinctive}, it is immediate that $A(G) = \inf_{v\neq w}|\g(v,w)|\leq Dim(G)$. 
\end{proof}


\begin{remark} Given a connected graph $G$ such that $\mathcal{M}(G) \neq \emptyset$, by \ref{Th:sigma} and \ref{p:A(G)} 
\[A(G) \leq  Dim(G) \leq \varsigma(G).\]

Notice that for any connected graph $G$ such that $\mathcal{M}(G) \neq \emptyset$, given $w \in \mathcal{M}(G)$ and two terminal vertices $u_j ,u_r$ of $w$ then $P(u_j,w,u_r)$ is a maximal geodesic in $G$. Hence, if $|P(u_j,w,u_r)|=A(G)$ for some $u_j,w,u_r$, then  
\[A(G) =  Dim(G) = \varsigma(G).\]
\end{remark}

\begin{example} Consider the graph $G$ in Figure \ref{Fig:1}. 
\begin{figure}[h]
\centering
\includegraphics[scale=0.7]{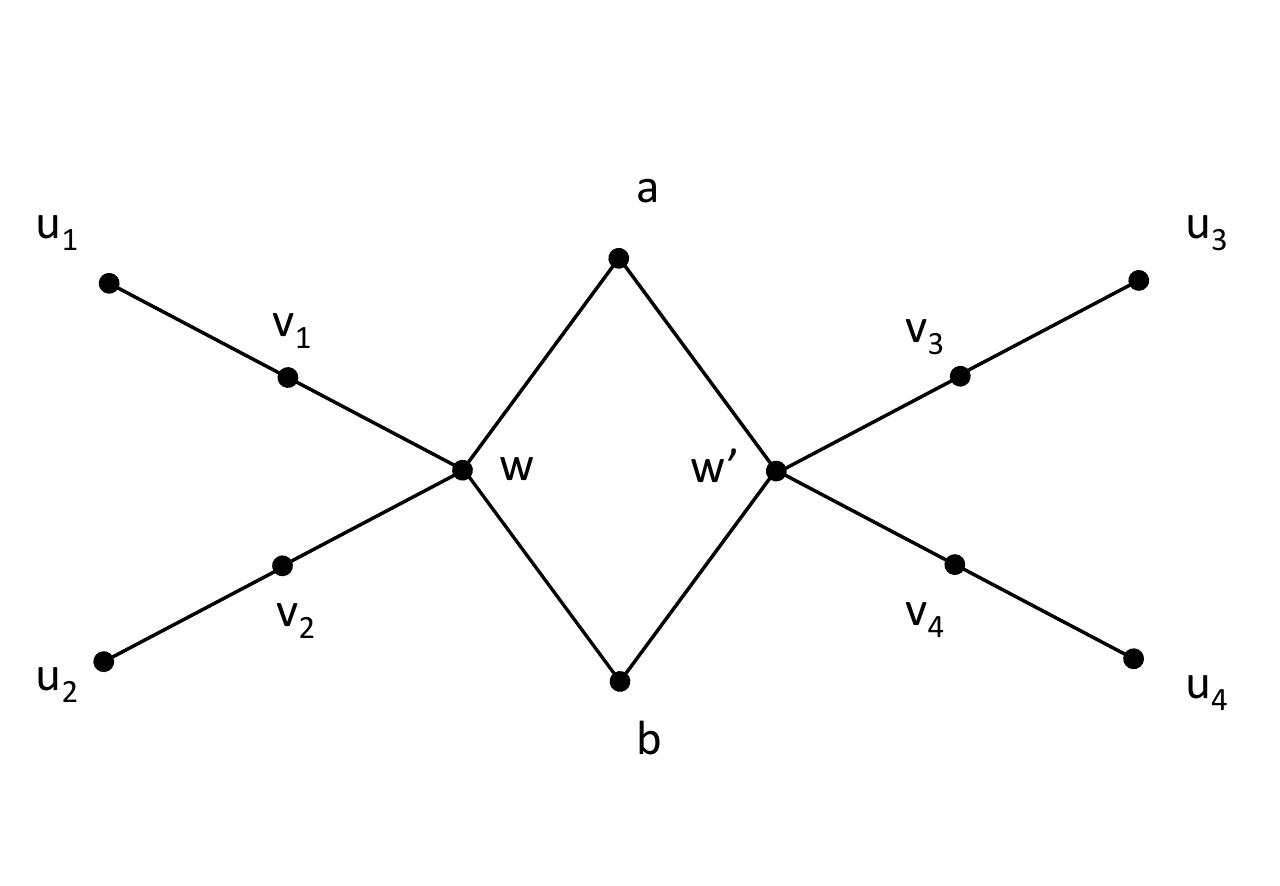}
\caption{For this graph, $2=A(G)=Dim(G)=\mu(G)=\eta(G)<\varsigma(G)=4$.}
\label{Fig:1}
\end{figure}
There are two major vertices, $w,w'$ in $G$ and each of them has two terminal vertices: $u_1,u_2$ are terminal vertices of $w$, $u_3,u_4$ are terminal vertices of $w'$. Thus, it is immediate to see that $\varsigma(w)=\varsigma(u_1,u_2)=4=\varsigma(u_3,u_4)=\varsigma(w)$ and $\varsigma(G)=4>Dim(G)$.

On the other hand, $\partial N(u_1,2)=\partial N (u_2,2)$, $\partial N(u_3,2)=\partial N (u_4,2)$, $\partial N(v_1,1)=\partial N (v_2,1)$, $\partial N(v_3,1)=\partial N (v_4,1)$ and  $\partial N(a,1)=\partial N (b,1)$. Hence, it is readily seen that $\eta_1(a,b)=\eta(G)=Dim(G)=2$. 

Also, $w$ is a common separating subset in $S(u_1,2)\cap S(u_2,2)$, in $S(v_1,1)\cap S(v_2,1)$, in $S(a,1)\cap S(b,1)$, in $S(u_3,4)\cap S(u_4,4)$ and in $S(v_3,3)\cap S(v_4,3)$. Also, $\{w,w'\}$ is a common separating subset in $S(v_1,1)\cap S(v_2,1)$ with $S_m(a,b)=\{w,w'\}$. There are other common separting subsets, however it is immediate to check that $\mu_1(a,b)=mu(G)=Dim(G)=2$.  

Finally, there is a maximal geodesic $[ab]$ with length 2 joining $a$ to $b$ and $A(G)=|[ab]|=Dim(G)=2$.

Thus, $2=A(G)=Dim(G)=\mu(G)=\eta(G)<\varsigma(G)=4$. 
\end{example}

\section{Block graphs}

A \emph{block graph} or \emph{clique tree} is a graph such that every biconnected component (\emph{block}) is a complete subgraph. In \cite{ERY}, the authors call it \textit{generalized tree} and define it using the following characterization.

Let $ \mathfrak{F} $ be the family of sequences  of connected graphs $ \mathcal{S} = ( G_1, ... , G_t )$, $ t\geqslant 2 $ such that $ G_1 $ is a complete graph $ K_{n_1} $ on $ n_1 \geqslant 2 $ vertices and $ G_i $, $ i \geqslant 2  $, is obtained recursively from $ G_{i-1} $ by adding a complete graph $ K_{n_i} $, $ n_i \geqslant 2  $,  and identifying one vertex of $ G_{i-1} $ with one vertex of $ K_{n_i} $. A connected graph $ G $ is a \emph{block graph} (or \textit{generalized tree}) if and only if there exists a sequence $ \mathcal{S} = ( G_1, ... , G_t ) \in \mathfrak{F} $ such that $ G_t = G $. From now on, we keep the more common name of block graph.

In \cite{ERY}, a vertex $v$ is called an \emph{extreme vertex} if the subgraph
induced by $N[v]$ is isomorphic to a complete graph. Then, Corollary 3 states that a block graph $ G $ is 2-metric dimensional if and only if $ G $ contains at least two extreme vertices adjacent to a common cut vertex. Unfortunately, this result is not completely true as the following example shows. Remark \ref{r:gen_tree} and Proposition \ref{p:gen_tree} below give a characterization of 2-metric dimensional block graphs.

\begin{example}\label{Counter}
	Consider the graph $ G $ in Figure \ref{Fig:2}. There are two extreme vertices (non-adjacent) $ c $ and $ e $ in $ G $ being adjacent to a common cut vertex $ d $. However $ G $ is 3-metric dimensional as we can see in Table 1 below. 
	\begin{figure}[h]
		\centering
		\includegraphics[scale=0.9]{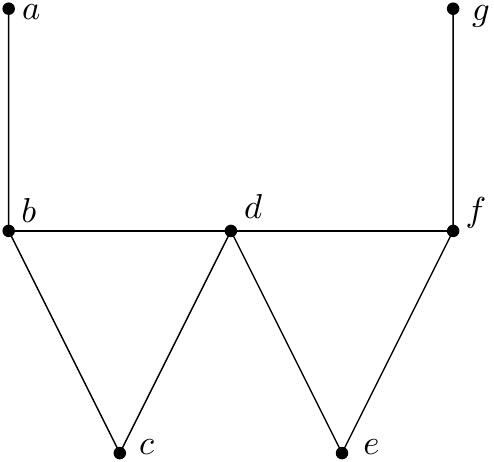}
		\caption{Counterexample to Corollary 3 in \cite{ERY}: $ G $ contains two extreme vertices adjacent to a common cut vertex and $ G $ is 3-metric dimensional. }
		\label{Fig:2}
	\end{figure}
	\begin{table}[h]
		\begin{center}
			$$ \begin{array}{|c|c|} 
			\hline
			x,y & \mathcal{D}_G (x,y) \smallsetminus \{ x,y \} \\ \hline
			a,b & \{c,d,e,f,g\} \\ \hline
			a,c & \{d,e,f,g\} \\ \hline
			a,d & \{c,e,f,g\} \\ \hline
			a,e & \{b,d,f,g\} \\ \hline
			a,f & \{b,d,e,g\} \\ \hline
			a,g & \{b,c,c,f\} \\ \hline
			b,c & \{a\} \\ \hline
			b,d & \{a,e,f,g\} \\ \hline
			b,e & \{a,c,f,g\} \\ \hline
			b,f & \{a,c,e,g\} \\ \hline
			b,g & \{a,c,d,f\} \\ \hline
			\end{array} \;\;
			\begin{array}{|c|c|} \hline
			x,y & \mathcal{D}_G (x,y) \smallsetminus \{ x,y \} \\ \hline
			c,d & \{e,f,g\} \\ \hline
			c,e & \{a,b,f,g\} \\ \hline
			c,f & \{a,b,e,g\} \\ \hline
			c,g & \{a,b,d,f\} \\ \hline
			d,e & \{a,b,c\} \\ \hline
			d,f & \{a,b,c,g\} \\ \hline
			d,g & \{a,b,c,e\} \\ \hline
			e,f & \{g\} \\ \hline
			e,g & \{a,b,c,d\} \\ \hline
			f,g & \{a,b,c,d,e\} \\ \hline
			& \\ \hline
			\end{array}  $$
			\caption{In the graph from Example \ref{Counter} (see Figure \ref{Fig:2}), it is immediate to check that $ \min_{x,y\in V(G), \ x\neq y} |\mathcal{D}_G(x,y)| = 3 = Dim(G)$.}
			\label{Tab:1}
		\end{center}
	\end{table}
	
\end{example}

\begin{definition} Given a graph $ G $ we say $ G $ is a V-graph if there exists a terminal vertex $ w $ such that $ \text{ter} (w) = 2 $ and $ d(u_i , w) = 1 $ for both terminal vertices $ u_i $ of $ w $.
\end{definition}

\begin{remark}\label{r:gen_tree} If $ G $ is a V-graph then $ G $ is 2-metric dimensional.
\end{remark}

By a cycle in a graph we mean a simple closed curve, this is, a path defined by a sequence of vertices which are all different except for the first one and last one which are the same.

\begin{remark}\label{r:cycle} If $T$ is a block graph, then every cycle in $T$ is contained in some complete subgraph.
\end{remark}

\begin{definition} We say that a block graph $G$ is \emph{tagged} if there is a maximal complete subgraph $K_r$ in $G$ with $r\geq 3$ and two vertices $u,v\in K_r$ such that $deg(u)=r-1=deg(v)$. 
\end{definition}

\begin{proposition}\label{p:gen_tree} Consider $ G $ a block graph which is not a V-graph. Then $ G $ is 2-metric dimensional if and only if $G$ is tagged.
\end{proposition}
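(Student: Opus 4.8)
The plan is to use Theorem \ref{th:distinctive}, so that proving $G$ is $2$-metric dimensional amounts to producing a pair of distinct vertices $x,y$ with $|\mathcal{D}_G(x,y)|=2$ (the minimum possible, since $x,y\in\mathcal{D}_G(x,y)$ always), while showing that no pair achieves this value unless the required structure ($G$ tagged) is present. I would first dispose of the easy direction. If $G$ is tagged, let $K_r$ ($r\geq 3$) be a maximal complete subgraph containing $u,v$ with $\deg(u)=r-1=\deg(v)$. These conditions say that $u$ and $v$ are \emph{simplicial} inside $K_r$: every neighbour of $u$ or of $v$ lies in $K_r$. I would then argue that for every vertex $z\notin\{u,v\}$ one has $d(z,u)=d(z,v)$: since any shortest path from an outside vertex to $u$ or $v$ must enter $K_r$ through one of the shared vertices $K_r\setminus\{u,v\}$ (each of which is at distance $1$ from both $u$ and $v$), the two distances coincide. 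Hence $\mathcal{D}_G(u,v)=\{u,v\}$ and $|\mathcal{D}_G(u,v)|=2$, forcing $Dim(G)=2$.

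For the converse I would assume $Dim(G)=2$ and, invoking Theorem \ref{th:distinctive} again, fix a pair $x,y$ with $|\mathcal{D}_G(x,y)|=2$, i.e. $\mathcal{D}_G(x,y)=\{x,y\}$ and $d(x,z)=d(y,z)$ for \emph{every} $z\neq x,y$. The goal is to extract from this pair a maximal clique witnessing the tagged condition. The first structural consequence is that $x$ and $y$ must be adjacent: if $d(x,y)=2$ with common neighbour $t$, I would show $t$ would have to be equidistant from $x$ and $y$ yet $d(x,t)=d(y,t)=1$ is consistent, so I would instead examine a neighbour of $t$ off the $x$--$y$ geodesic to derive a distinctive third vertex using the block-graph structure (Remark \ref{r:cycle}). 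More generally, since $G$ is not a V-graph, I can rule out the degenerate configuration where $x,y$ are pendant. I would then let $K_r$ be the (unique, by Remark \ref{r:cycle}) block containing the edge $xy$ and show $d(x,z)=d(y,z)$ for all $z$ forces every neighbour of $x$ and every neighbour of $y$ to lie in $K_r$; any neighbour $z$ of $x$ outside $K_r$ would give $d(x,z)=1<d(y,z)$, a distinctive vertex distinct from $x,y$. This yields $\deg(x)=\deg(y)=r-1$ and $r\geq 3$ (using that $G$ is not a V-graph to exclude $r=2$), so $G$ is tagged with the witnessing clique $K_r$ and vertices $u=x$, $v=y$.

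The main obstacle I anticipate is the careful case analysis in the converse establishing that $x,y$ are adjacent and that the relevant block has $r\geq 3$. The equidistance condition $d(x,z)=d(y,z)$ for all $z$ is very rigid, but translating it into "all neighbours of $x,y$ lie in a common clique" requires the block-graph geometry: in a block graph, distances between two vertices are realized by a unique sequence of blocks, so I would use Remark \ref{r:cycle} to argue that a distinctive vertex appears as soon as $x$ or $y$ has a neighbour outside the block they share. The delicate point is excluding the V-graph case cleanly, since there $x,y$ could be the two pendant terminal vertices of a common vertex and still satisfy $\mathcal{D}_G(x,y)=\{x,y\}$ without $G$ being tagged; the hypothesis that $G$ is not a V-graph is exactly what is needed to push $r$ up to at least $3$.
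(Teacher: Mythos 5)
Your overall skeleton is the same as the paper's: in the forward direction you argue, essentially verbatim as the paper does, that if $G$ is tagged then every vertex outside $\{u,v\}$ reaches both $u$ and $v$ through $K_r\setminus\{u,v\}$, so $\mathcal{D}_G(u,v)=\{u,v\}$; and in the converse both you and the paper take a pair $x,y$ with $|\mathcal{D}_G(x,y)|=2$, force $x$ and $y$ into a common block, and finish with the degree argument (a neighbour of $x$ outside that block would be a third distinctive vertex). However, one step of your converse fails as stated: in the case $d(x,y)=2$ with common neighbour $t$, you propose to ``examine a neighbour of $t$ off the $x$--$y$ geodesic to derive a distinctive third vertex.'' A neighbour of $t$ off the geodesic is in general \emph{not} distinctive, since it can be equidistant from $x$ and $y$. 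For instance, take the tree with edges $xt$, $ty$, $ts$, $ss'$, $xx'$ (which is a block graph and not a V-graph): here $\mathcal{D}_G(x,y)=\{x,y,x'\}$, but the only neighbour of $t$ off the geodesic is $s$, and $d(s,x)=2=d(s,y)$. The third distinctive vertex must instead be sought among the neighbours of $x$ or $y$ themselves, and this is precisely where the paper uses the non-V-graph hypothesis: since $G$ is not a V-graph, $x$ or $y$ has a neighbour $x'\neq t$; by Remark \ref{r:cycle} this $x'$ cannot be adjacent to $y$ (otherwise the cycle through $x,x',y,t$ would lie in a complete subgraph, forcing $x$ and $y$ adjacent), hence $d(x',x)=1<d(x',y)$ and $x'\in\mathcal{D}_G(x,y)$. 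You do invoke the right hypothesis (``rule out the degenerate configuration where $x,y$ are pendant''), but you attach it to the wrong vertex; your plan needs this local repair to go through.

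A smaller point: your parenthetical claim that the non-V-graph hypothesis excludes $r=2$ is also not correct, since $K_2$ is a block graph that is not a V-graph, is 2-metric dimensional, and is not tagged. But the paper's own proof is equally silent about why $r\geq 3$ holds in the converse, so this lacuna is shared with the paper rather than introduced by you; the substantive defect in your proposal is the $d(x,y)=2$ step above.
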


\begin{proof} Suppose that exists $ K_{r} $ with $ r \geqslant 3 $ and $u,v\in K_{r}$ with $ deg(u)=r-1=deg(v) $. Consider any other vertex $ x\in G $. If $ x \in K_{r} $ then $ d_G (x,u) = 1 = d_G (x,v) $ and $x \notin  \mathcal{D}_G (u,v)$. If $ x \notin K_{r} $ then, since $deg(u)=r-1=deg(v)$, there exists a vertex $ w \in K_{r} $, different from $ u $ and $ v $, such that $ d_G (x,w) = \min\limits_{y \in K_{r}} d_G (x,y) $. Therefore $ d_G (x,u) = d_G (x,w) + 1 = d_G (x,v) $ and $x \notin  \mathcal{D}_G (u,v)$. Thus, $ \mathcal{D}_G (u,v) = \{ u, v \} $ and, by Theorem \ref{th:distinctive}, $ G $ is 2-metric dimensional.

If $ G $ is 2-metric dimensional, by Theorem \ref{th:distinctive}, there exist two vertices $ u, v \in G $ such that $ \left| \mathcal{D}_G (u,v) \right|  = 2 $. 

Suppose that $ u \in K_{r} $ and $ v \in K_{s} $ with $K_r,K_s$ maximal complete subgraphs and $ K_r \neq K_s $. If $ d_G(u,v) = 1 $ then there exists a vertex $ u' \in K_{r} $ (or $ v' \in K_{s} $) such that $ \{u',u,v \}$ (or $ \{u,v,v' \}$) defines a geodesic path with length 2 and $ \left| \mathcal{D}_G (u,v) \right| \geqslant 3  $.
If $ d_G (u,v) = 2 $, let $ w $ be the vertex such that $ d_G(u,w) = 1 = d_G (v,w) $. Since $ G $ is not a V-graph, either $ u $ or $ v $ has an adjacent vertex different from $ w $. Suppose $ u' $ is adjacent to $ u $. Hence, by Remark \ref{r:cycle}, $u'$ is not adjacent to $v$ and therefore,  $ \{u',u,v \}\subset \mathcal{D}_G (u,v)$ and $ \left| \mathcal{D}_G (u,v) \right| \geqslant 3  $.
If $ d_G (u,v) \geqslant 3 $ it is trivial that $ \left| \mathcal{D}_G (a,b) \right| \geqslant 4  $. Thus, we conclude that $K_r=K_s$. 

If $deg(u)\geqslant r$ (respectively $deg(v)\geqslant r $), then there is a vertex $u'$ which is adjacent to $u$ and not adjacent to $v$. Thus, $ \{u',u,v \}\subset \mathcal{D}_G (u,v)$ and $ \left| \mathcal{D}_G (u,v) \right| \geqslant 3  $, leading to contradiction. 
\end{proof}


\begin{remark} Any finite graph is $k$-metric dimensional for some finite $k$.
\end{remark}

\begin{theorem} An infinite tree $ T $ is k-metric dimensional for some finite $k$ if and only if there exists a vertex $ w \in G $ such that $ T \smallsetminus \{ w \} $ has at least two finite connected components.
\end{theorem}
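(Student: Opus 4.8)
The plan is to reduce everything to Theorem \ref{th:distinctive}, according to which $T$ is $k$-metric dimensional for some finite $k$ precisely when $\min_{x\neq y}|\mathcal{D}_T(x,y)|$ is finite, i.e. when there exists at least one pair of distinct vertices $x,y$ with $|\mathcal{D}_T(x,y)|<\infty$. The whole argument then rests on an explicit description of the distinctive set $\mathcal{D}_T(x,y)$ in a tree. I would fix $x\neq y$, let $[x,y]$ be the unique geodesic between them, and for any vertex $z$ let $p(z)$ denote the point where the path from $z$ first meets $[x,y]$. Then $d_T(z,x)=d_T(z,p(z))+d_T(p(z),x)$ and similarly for $y$, so $z$ is equidistant from $x$ and $y$ if and only if $p(z)$ is the midpoint of $[x,y]$.

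From this I would split according to the parity of $d_T(x,y)$. If $d_T(x,y)$ is odd there is no vertex midpoint, so no $z$ can be equidistant and $\mathcal{D}_T(x,y)=V(T)$ is infinite. If $d_T(x,y)=2s$ is even, let $c$ be the unique vertex at distance $s$ from both $x$ and $y$, and let $T_1,T_2$ be the connected components of $T\smallsetminus\{c\}$ containing $x$ and $y$ respectively; these are distinct since $c$ separates $x$ from $y$. A short projection argument should show that $z$ is equidistant from $x$ and $y$ exactly when $z\in V(T)\smallsetminus(T_1\cup T_2)$, whence $\mathcal{D}_T(x,y)=T_1\cup T_2$ and $|\mathcal{D}_T(x,y)|=|T_1|+|T_2|$. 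Consequently $|\mathcal{D}_T(x,y)|$ is finite if and only if $d_T(x,y)$ is even and both components $T_1$ and $T_2$ are finite.

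With this description in hand the two implications become immediate. For the forward direction, if $T$ is $k$-metric dimensional with $k$ finite, I would pick a pair $x\neq y$ realizing $|\mathcal{D}_T(x,y)|=k<\infty$; by the description $d_T(x,y)$ must be even and its midpoint $c$ is a vertex for which $T\smallsetminus\{c\}$ has the two finite components $T_1$ and $T_2$, so $w=c$ works. For the converse, given a vertex $w$ such that $T\smallsetminus\{w\}$ has two finite components $T_1,T_2$, I would choose the neighbours $x\in T_1$ and $y\in T_2$ of $w$ (each component meets $w$ by connectedness of $T$); then $d_T(x,w)=d_T(y,w)=1$, the midpoint of $[x,y]$ is $w$, and $|\mathcal{D}_T(x,y)|=|T_1|+|T_2|<\infty$, so $\min_{x\neq y}|\mathcal{D}_T(x,y)|$ is a finite integer (at least $2$) and $T$ is $k$-metric dimensional for some finite $k$ by Theorem \ref{th:distinctive}.

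I expect the only delicate point to be the projection computation identifying the equidistant vertices with $V(T)\smallsetminus(T_1\cup T_2)$: one must check that every vertex lying in a component of $T\smallsetminus\{c\}$ other than $T_1,T_2$, as well as $c$ itself, projects onto $c$ and is therefore equidistant, while every vertex of $T_1$ and $T_2$ projects strictly inside the corresponding half of $[x,y]$ and so is distinctive. Once this equidistance criterion is established, everything else is bookkeeping.
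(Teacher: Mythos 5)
Your proposal is correct and follows essentially the same route as the paper's proof: both reduce to Theorem \ref{th:distinctive}, analyze the midpoint of the geodesic $[x,y]$ (odd distance forces $\mathcal{D}_T(x,y)=V(T)$, even distance gives $\mathcal{D}_T(x,y)=T_1\cup T_2$ for the two components of $T\smallsetminus\{c\}$ containing $x$ and $y$), and in the converse pick the two neighbours of $w$ in the finite components. Your projection argument merely supplies the details that the paper leaves implicit when it asserts $\mathcal{D}_G(u,v)=C^m_u\cup C^m_v$.
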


\begin{proof} Suppose $ T $ is $ k $-metric dimensional for some finite $ k $. By Theorem \ref{th:distinctive}, there exist two vertices $ u,v \in T $ such that $ \mathcal{D}_G(u,v) $ is finite. Consider $m$ the middle point in the geodesic path $[uv] $. If $ m $ is not a vertex, then $ \mathcal{D}_G(u,v) = T $, leading to a contradiction. If $ m $ is a vertex, then let $ C^m _u , C^m _v $ be the connected components of $T \smallsetminus \{ m \} $ containing $ u,v $ respectively. Therefore $ \mathcal{D}_G(u,v) = C ^m_u \cup C^m _v $ and $ C ^m_u \cup C^m _v $ is finite.
	
Suppose $ w \in G $ is a vertex such that $ T \smallsetminus \{ w \} $ has at least two finite connected components $C_1 , C_2$. Consider $ v_i \in C_i $ with $ d_G(v_i , w) = 1 $ for $ i = 1,2 $. Then $ \mathcal{D}_G (v_1,v_2) = C_1 \cup C_2 $ and $ \mathcal{D}_G (v_1,v_2) $ is finite. 
\end{proof}

\begin{definition} An infinite block graph $G$ is narrow if one of the following conditions holds:
	\begin{enumerate}
		\item[(i)] There exists a vertex $w\in G$ such that $G\smallsetminus \{ w\} $ has at least two finite connected components.
		
		\item[(ii)] There exists a complete subgraph $ K_n $, $ n\geqslant 3 $, such that $ G \smallsetminus E(K_n) $ has at least two finite connected components.
	\end{enumerate}
\end{definition}

\begin{remark}\label{r:unique} It is well known that if $ G $ is a block graph then given any two vertices in $G$ there is a unique geodesic path joining them.
\end{remark}

\begin{theorem} An infinite block graph $ G $ is $k$-metric dimensional for some finite $k$ if and only if $ G $ is narrow.
\end{theorem}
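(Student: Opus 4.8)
The plan is to route everything through Theorem~\ref{th:distinctive}. Since $Dim(G)=\min_{x\neq y}|\mathcal{D}_G(x,y)|$, the graph $G$ is $k$-metric dimensional for some finite $k$ if and only if there exist two distinct vertices $u,v$ with $|\mathcal{D}_G(u,v)|<\infty$. Thus the whole statement reduces to the equivalence ``$G$ is narrow $\iff$ some pair $(u,v)$ has a finite distinctive set''. This mirrors the infinite-tree theorem proved just above, and I would present the block-graph version as a refinement of that argument, the new ingredient being that the middle of a geodesic may fall inside a complete block rather than on a single vertex.

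For the direction \emph{narrow $\Rightarrow$ finite}, I would treat the two defining conditions separately. If condition (i) holds, fix the cut vertex $w$ with two finite components $C_1,C_2$ of $G\setminus\{w\}$ and choose $v_i\in C_i$ adjacent to $w$ (so $v_1\not\sim v_2$, since they lie in different components). Exactly as in the tree case, any $z$ outside $C_1\cup C_2$ satisfies $d(v_1,z)=d(w,z)+1=d(v_2,z)$, because the geodesic from such a $z$ to either $v_i$ is forced through the cut vertex $w$; while for $z\in C_i$ the geodesic to $v_i$ avoids $w$ and the one to $v_{3-i}$ does not, so $d(v_1,z)\neq d(v_2,z)$. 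Hence $\mathcal{D}_G(v_1,v_2)=C_1\cup C_2$ is finite. If condition (ii) holds, fix $K_n$ with $n\geq 3$ such that $G\setminus E(K_n)$ has two finite components; writing $D_x$ for the component of $G\setminus E(K_n)$ meeting $K_n$ at $x$, pick $u,v\in K_n$ with $D_u,D_v$ finite. For $z$ attached to $K_n$ through a third vertex $w\in K_n$ one gets $d(u,z)=d(w,z)+1=d(v,z)$, whereas for $z\in D_u$ (resp. $z\in D_v$) the geodesic to $v$ (resp. to $u$) runs through $u$ (resp. $v$), forcing $d(u,z)\neq d(v,z)$. Thus $\mathcal{D}_G(u,v)=D_u\cup D_v$ is finite.

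For the converse, take $u\neq v$ with $\mathcal{D}_G(u,v)$ finite and consider the unique geodesic $u=x_0,\dots,x_d=v$ furnished by Remark~\ref{r:unique}. The key computation is that for any vertex $z$ the geodesics $[z,u]$ and $[z,v]$ meet this spine either in a single gate vertex $x_i$, giving $d(u,z)=d(z,x_i)+i$ and $d(v,z)=d(z,x_i)+(d-i)$, or in a pair of consecutive spine vertices $x_i,x_{i+1}$ lying with $z$ in a common block, giving $d(u,z)=d(z,x_i)+i$ and $d(v,z)=d(z,x_{i+1})+(d-i-1)$ with $d(z,x_i)=d(z,x_{i+1})$. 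In the first case $z$ is equidistant from $u,v$ exactly when $i=d/2$; in the second, exactly when $i=(d-1)/2$. Consequently, if $d$ is even the equidistant vertices are precisely those gated to the midpoint $m=x_{d/2}$, so the two components of $G\setminus\{m\}$ containing $u$ and $v$ are contained in $\mathcal{D}_G(u,v)$ and are therefore finite, yielding condition (i) with $w=m$. If $d$ is odd, let $K_n$ be the block containing the middle edge $x_{(d-1)/2}x_{(d+1)/2}$: when $n=2$ no vertex can be equidistant, so $\mathcal{D}_G(u,v)=V(G)$ would be infinite, a contradiction; hence $n\geq 3$, the equidistant set is the union of the components $D_w$ of $G\setminus E(K_n)$ over the vertices $w\in K_n$ off the spine, and the two remaining components $D_{x_{(d-1)/2}}$ and $D_{x_{(d+1)/2}}$, both contained in $\mathcal{D}_G(u,v)$, are finite, yielding condition (ii).

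The step that needs the most care, and which I expect to be the main obstacle, is the gate computation for block graphs: justifying that $[z,u]$ and $[z,v]$ can separate from the spine only at a single vertex or within one complete block, and that the corresponding nearest-point set on the spine is either one vertex or exactly one spine edge. This is where the defining property of block graphs (every block complete, equivalently unique geodesics) must be invoked, and it is precisely the possibility of an even split occurring \emph{inside} a block that produces the dichotomy between conditions (i) and (ii) and explains why a middle block $K_n$ with $n\geq 3$ is needed for odd distances. Everything else amounts to the same bookkeeping with distances already carried out in the infinite-tree theorem above.
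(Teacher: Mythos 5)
Your proposal is correct and follows essentially the same route as the paper's own proof: reduce via Theorem~\ref{th:distinctive} to the existence of a pair with finite distinctive set, handle the two narrowness conditions exactly as in the infinite-tree theorem, and for the converse split on whether the midpoint of the unique geodesic is a vertex (giving condition (i)) or lies on an edge whose block must then have at least three vertices (giving condition (ii)). The only difference is that you make explicit the gate/midpoint computation that the paper leaves implicit, which is a point in your favour rather than a divergence of method.
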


\begin{proof} Suppose $G$ is $k$-metric dimensional for some finite $k$. By Theorem \ref{th:distinctive}, there exist two vertices $u,v \in G$ such that $ \mathcal{D}_G(u,v) $ is finite. Consider $m$ the middle point in the geodesic path $[uv]$. If $m$ is a vertex, then let $ C^m _u , C^m _v $ be the connected components of $G \smallsetminus \{ m \} $ containing $ u,v $ respectively. Therefore $ \mathcal{D}_G(u,v) = C ^m_u \cup C^m _v $ and $ C ^m_u, C^m _v $ are finite.
If $m$ is not a vertex, consider the edge $ e \in T $ such that $m \in e$. We can see that $ e \in K_n $ for some $n\geqslant 3$ since otherwise $ \mathcal{D}_G(u,v) = T $, leading to a contradiction. Then let $ C_u , C_v $ be the connected components of $T \smallsetminus E(K_n) $ containing $ u,v $ respectively. Therefore $ \mathcal{D}_G(u,v) = C_u \cup C_v  $ and $ C_u, C_v $ are finite.

Suppose $ T $ is narrow. If $(i)$ holds, there exist a vertex $ w \in T $ such that $ T \smallsetminus \{ w \} $ has at least two finite connected components $C_1 , C_2$. Consider $ v_i \in C_i $ with $ d_G(v_i , w) = 1 $ for $ i = 1,2 $. Then $ \mathcal{D}_G (v_1,v_2) = C_1 \cup C_2 $ and $ \mathcal{D}_G (v_1,v_2) $ is finite. If $(ii)$ holds, there exists a complete subgraph $ K_n $, $ n\geqslant 3 $, such that $ T \smallsetminus E(K_n) $ has at least two finite connected components $ C_1 , C_2 $. Consider $ u_i \in K_n \cap C_i $ for $i=1,2$. Therefore $ \mathcal{D}_G (u_1,u_2) = C_1 \cup C_2 $ and $ \mathcal{D}_G (u_1,u_2) $ is finite.
\end{proof}

Let us recall the following result on $k$-metric dimensional trees.

\begin{theorem}\cite[Th. 9]{ERY}\label{th:treedimension} If $T$ is a k-metric dimensional tree different from a path, then $k = \varsigma(T)$.
\end{theorem}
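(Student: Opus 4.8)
The plan is to split the statement into the two inequalities $Dim(T)\le\varsigma(T)$ and $Dim(T)\ge\varsigma(T)$, using Theorem~\ref{th:distinctive} to identify $Dim(T)$ with $\min_{x\neq y}|\mathcal{D}_T(x,y)|$ throughout. First I would record that a tree which is not a path has $\mathcal{M}(T)\neq\emptyset$: rooting $T$ at any leaf and taking a major vertex $w$ of maximal depth, every branch of $w$ pointing away from the root contains no further major vertex, hence is a pendant path ending in a terminal vertex of $w$; since $\deg w\ge 3$ at least two such branches exist, so $ter(w)\ge 2$ and $w\in\mathcal{M}(T)$. This makes Theorem~\ref{Th:sigma} applicable and already delivers the upper bound $Dim(T)\le\varsigma(T)$.

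The heart of the argument is the lower bound $|\mathcal{D}_T(x,y)|\ge\varsigma(T)$ for \emph{every} pair $x\neq y$. Exploiting the unique geodesics of Remark~\ref{r:unique}, I would analyse $\mathcal{D}_T(x,y)$ via the midpoint of $[x,y]$. If $d(x,y)$ is odd the midpoint lies in the interior of an edge, every vertex distinguishes $x$ and $y$, and $\mathcal{D}_T(x,y)=V(T)$. If $d(x,y)$ is even with midpoint vertex $m$, a vertex $z$ fails to distinguish $x,y$ exactly when the geodesic from $z$ first meets $[x,y]$ at $m$; hence $\mathcal{D}_T(x,y)=C_x\cup C_y$, where $C_x,C_y$ are the components of $T\setminus\{m\}$ containing $x$ and $y$, and $|\mathcal{D}_T(x,y)|=|C_x|+|C_y|$.

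It then remains to bound $|C_x|+|C_y|$ below by $\varsigma(T)$, by cases on the branch $C_x$ (and symmetrically $C_y$). The clean case is when $m$ is major and both $C_x,C_y$ are pendant paths: their leaves $\ell_x,\ell_y$ are terminal vertices of $m$, so $m\in\mathcal{M}(T)$ and $|C_x|+|C_y|=d(\ell_x,m)+d(\ell_y,m)\ge\varsigma(m)\ge\varsigma(T)$. The main obstacle is the case where some branch, say $C_x$, is not a pendant path. There I would pick the major vertex $w_x\in C_x$ farthest from $m$, observe that all of its $\ge 2$ branches away from $m$ are pendant paths yielding terminal vertices $t_1,t_2$ of $w_x$, and count disjoint vertices to get $|C_x|\ge d(m,w_x)+d(w_x,t_1)+d(w_x,t_2)>d(w_x,t_1)+d(w_x,t_2)\ge\varsigma(w_x)\ge\varsigma(T)$, so $|C_x|+|C_y|>\varsigma(T)$ outright. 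The degenerate situations ($\deg m=2$ or $d(x,y)$ odd) are absorbed by the crude estimate $\varsigma(T)\le|V(T)|-2$, valid because the $\varsigma(T)+1$ vertices of a minimal path $P(t_1,w,t_2)$ still leave room for at least one more vertex along a third edge at $w$. Combining both inequalities gives $Dim(T)=\varsigma(T)=k$. I expect the delicate point to be checking that $\ell_x,\ell_y$ (respectively $t_1,t_2$) are terminal vertices of $m$ (respectively $w_x$) \emph{in the global sense}, that is, that no nearer major vertex intervenes on the way to the leaf; this is precisely what the ``farthest major vertex'' choice and the pendant-path structure are designed to guarantee.
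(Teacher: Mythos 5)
Your proof is correct, but note that there is nothing in the paper to compare it against: the paper does not prove this statement at all, it simply quotes it from \cite{ERY} (Theorem 9 there), as it does with Theorems \ref{th:distinctive} and \ref{Th:sigma}. Judged on its own, your argument is sound and complete. The midpoint dichotomy you use --- $\mathcal{D}_T(x,y)=V(T)$ when $d(x,y)$ is odd, and $\mathcal{D}_T(x,y)=C_x\cup C_y$ for the two components at the midpoint vertex $m$ when $d(x,y)$ is even --- is exactly the mechanism this paper itself deploys later, in the infinite-tree theorem and in Proposition \ref{P:non-tagged}, on the way to the block-graph generalization (Theorem \ref{th:blockdimension}); so your proof is the natural specialization of the paper's own technique to trees. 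The two delicate points are handled correctly: choosing the major vertex $w_x\in C_x$ \emph{farthest} from $m$ does force every branch of $w_x$ away from $m$ to be a pendant path (any major vertex in such a branch would lie in $C_x$ and be strictly farther from $m$), so its leaves are terminal vertices of $w_x$ in the global sense and $w_x\in\mathcal{M}(T)$ with $\varsigma(w_x)\le d(w_x,t_1)+d(w_x,t_2)$; and the degenerate cases are genuinely absorbed by the estimate $\varsigma(T)\le |V(T)|-2$, since when $\deg m=2$ one has $|\mathcal{D}_T(x,y)|=|V(T)|-1$ and when $d(x,y)$ is odd one has $|\mathcal{D}_T(x,y)|=|V(T)|$, while $\deg w\ge 3$ forces at least one vertex of $T$ off the path $P(t_1,w,t_2)$. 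One small aesthetic point: your upper bound leans on Theorem \ref{Th:sigma}, which is itself only cited (from the same source as the statement being proved); this is legitimate given how the paper is organized, but your even-case analysis already yields it for free --- taking $w\in\mathcal{M}(T)$ and terminal vertices $u_1,u_2$ realizing $\varsigma(T)=a+b$ with $a=d(u_1,w)\le b=d(u_2,w)$, the pair consisting of $u_1$ and the vertex at distance $a$ from $w$ on $[w,u_2]$ has midpoint $w$ and $|\mathcal{D}_T|=a+b=\varsigma(T)$, so the whole proof could be made self-contained.
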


This result can be generalized for block graphs using $\mu(T)$ to obtain Theorem 
\ref{th:blockdimension} below.

\begin{definition} A block graph $G$ is \emph{non-elementary} if it is neither a complete graph nor a path graph.
\end{definition}

\begin{proposition}\label{P:non-elementary} If $G$ is a  block graph, then $ \mathcal{P}(G) \neq \emptyset $ if and only if $G$ is non-elementary.
\end{proposition}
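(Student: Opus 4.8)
The plan is to prove the two implications separately; throughout, $G$ denotes a connected block graph. I will show that if $G$ is non-elementary then $\mathcal{P}(G)\neq\emptyset$ by exhibiting an explicit pair $(v,v')$ together with a common separating subset of the form $\{c\}$ for a suitable cut vertex $c$, invoking Remark \ref{r:cut} with $m=1$; and conversely that the two elementary families, complete graphs and paths, force $\mathcal{P}(G)=\emptyset$ by analysing the intersections $S(v,m)\cap S(v',m)$.

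For the forward implication, note first that a non-elementary block graph is not complete, so it has at least two blocks and hence at least one cut vertex. I would split into two cases according to the largest block size. If $G$ contains a block $K_r$ with $r\geq 3$, then since $G\neq K_r$ this block must contain a cut vertex $c$ (otherwise $K_r$ would be an entire connected component of $G$, contradicting the existence of a second block). As $r\geq 3$ there are two distinct vertices $v,v'\in K_r\setminus\{c\}$, both at distance $1$ from $c$, lying in the same component of $G\setminus\{c\}$ (the one containing the connected set $K_r\setminus\{c\}$); since $c$ is a cut vertex there is another component, which contains neither $v$ nor $v'$. By Remark \ref{r:cut}, $\{c\}$ is then a common separating subset and $(v,v')\in\mathcal{P}_1(G)$. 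If instead every block of $G$ is a $K_2$, then $G$ is a tree; as $G$ is not a path it has a vertex $w$ of degree at least $3$, which is a cut vertex, and two neighbours $v,v'$ of $w$ together with a third neighbour lying in a component of $G\setminus\{w\}$ containing neither $v$ nor $v'$ (distinct neighbours of a cut vertex of a tree fall in distinct components) again yield $(v,v')\in\mathcal{P}_1(G)$ by Remark \ref{r:cut}. In both cases $\mathcal{P}(G)\neq\emptyset$.

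For the converse I show directly that elementary graphs have $\mathcal{P}(G)=\emptyset$. If $G=K_n$, then for distinct $v,v'$ the only nonempty common sphere intersection is $S(v,1)\cap S(v',1)=V\setminus\{v,v'\}$, and deleting any $S\subseteq V\setminus\{v,v'\}$ leaves a complete, hence connected, graph still containing both $v$ and $v'$; thus no component of $G\setminus S$ can avoid both and no common separating subset exists. If $G=P_n$ is a path, then $|S(v_i,m)|\leq 2$ and $S(v_i,m)\cap S(v_j,m)$ reduces to at most the single midpoint vertex of the geodesic $[v_i,v_j]$; deleting one vertex of a path produces at most two components, each containing one of the endpoints $v_i,v_j$, so once more there is no avoiding component. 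Hence $\mathcal{P}(G)=\emptyset$ for both elementary types, which is the contrapositive of the forward direction.

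I expect the main obstacle to be the converse, and specifically the path case: one must argue both that the intersection $S(v,m)\cap S(v',m)$ collapses to a single (mid)point and that deleting that one point cannot create a third component, so that there is genuinely no candidate for a separating set — this is exactly where the ``path'' hypothesis enters essentially. A secondary point requiring care is the exhaustiveness and correctness of the case split in the forward direction, namely verifying that ``not complete'' supplies a cut vertex and that a block of size at least $3$ always contains one.
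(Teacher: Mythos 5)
Your proof is correct, and its core mechanism is the same as the paper's: exhibit a cut vertex lying in the $1$-spheres of two suitable vertices and invoke Remark \ref{r:cut}, then dispose of the converse by checking the two elementary families directly. However, your case split on block sizes is not merely cosmetic --- it actually repairs a gap in the paper's own argument. The paper's proof begins by asserting that since $G$ is not a path there exists a complete subgraph $K_n$ with $n\geqslant 3$; this is false for block graphs all of whose blocks are edges, i.e.\ for trees: the star $K_{1,3}$ is neither a path nor a complete graph, yet contains no triangle. Your Case 2 (all blocks are $K_2$, so $G$ is a tree which, not being a path, has a vertex $w$ of degree at least $3$ whose neighbours fall into distinct components of $G\smallsetminus\{w\}$) covers exactly the graphs that the paper's argument misses, while your Case 1 coincides with the paper's argument: a block $K_r$ with $r\geqslant 3$ must contain a cut vertex $c$ because $G$ is connected and not complete, and the two remaining block vertices adjacent to $c$ give a pair in $\mathcal{P}_1(G)$. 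On the converse, the paper simply declares it ``trivial to check,'' whereas you carry out the verification, correctly identifying the one point needing care: in a path, $S(v,m)\cap S(v',m)$ is at most the single midpoint of the geodesic $[v,v']$, and deleting an interior vertex of a path yields exactly two components, each containing one of $v,v'$, so no component avoids both; and in $K_n$ every vertex-deleted subgraph remains complete, hence connected. So your write-up is both correct and, on the forward implication, strictly more careful than the published proof.
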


\begin{proof} Suppose $G$ is a non-elementary block graph. Since $G$ is not a path, there exists a complete subgraph $K_n \subseteq T$ with $n\geqslant 3$. Since $G$ is not a complete graph, then there exists $v\in K_n$ such that $\delta (v) > n - 1$. Since $n\geqslant 3$, then there exist two different vertices $x,y \in K_n \smallsetminus \{v\}$. Therefore, $S_1(x,y)\neq\emptyset $ and $\mathcal{P}(G) \neq\emptyset$.

If $G$ is a complete graph or a path graph, then it is trivial to check that $\mathcal{P}(G) = \emptyset $.
\end{proof}

\begin{remark} For any $n\geq 3$,
\begin{itemize} 
\item if $K_n$ is the complete graph with $n$ vertices, then $Dim(K_n)=2$.
\item f $P_n$ is the path graph with $n$ vertices, then $Dim(P_n)=n-1$.
\end{itemize}
\end{remark}

\begin{proposition}\label{P:tagged} It $G$ is a non-elementary tagged block graph, then $\mu(G)=2$.
\end{proposition}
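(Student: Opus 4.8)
The plan is to establish the two inequalities $\mu(G)\geq 2$ and $\mu(G)\leq 2$ separately. Since $G$ is non-elementary, Proposition \ref{P:non-elementary} guarantees $\mathcal{P}(G)\neq\emptyset$, so $\mu(G)$ is well defined. For the lower bound I would combine Theorem \ref{Th:mu} with the universal inequality $Dim(G)\geq 2$, giving $\mu(G)\geq Dim(G)\geq 2$; alternatively one checks directly that for any $(x,y)\in\mathcal{P}_m(G)$ both $x$ and $y$ are distinctive vertices of the pair that never lie in the separated components $C_m^j$, so $\mu_m(x,y)\geq 2$. The substance of the proof is therefore to exhibit a single admissible pair realising the value $2$.

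For the upper bound I would use the tagged structure directly. Let $K_r$, $r\geq 3$, be a maximal complete subgraph furnished by the definition of tagged, and let $u,v\in K_r$ with $\deg(u)=r-1=\deg(v)$. The first step is to show that $(u,v)\in\mathcal{P}_1(G)$, witnessed by the set $S=K_r\setminus\{u,v\}$. Clearly $S\subset S(u,1)\cap S(v,1)$, since every vertex of $K_r$ other than $u,v$ is adjacent to both. To see that $S$ separates a component avoiding $u,v$, I would argue as follows: because $G$ is non-elementary it is not complete, so $K_r$ is a proper subgraph and, $G$ being connected, some vertex outside $K_r$ attaches to $K_r$ through a cut vertex; since $\deg(u)=\deg(v)=r-1$, both $u$ and $v$ have all their neighbours inside $K_r$ and hence are not cut vertices, so every external attachment occurs at a vertex of $S$. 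Using the block structure (every path leaving $K_r$ must pass through such a cut vertex, cf.\ Remark \ref{r:unique}), removing $S$ disconnects the external part of $G$ from the edge $uv$, leaving $\{u,v\}$ as one component and at least one external component not containing $u$ or $v$. Thus $S$ is a common separating subset and $(u,v)\in\mathcal{P}_1(G)$.

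The final step is to evaluate $\mu_1(u,v)$. Exactly as in the proof of Proposition \ref{p:gen_tree}, the hypotheses $\deg(u)=r-1=\deg(v)$ force $\mathcal{D}_G(u,v)=\{u,v\}$: any $x\in K_r\setminus\{u,v\}$ satisfies $d(x,u)=1=d(x,v)$, and any $x\notin K_r$ reaches both $u$ and $v$ through a common gateway $w\in K_r\setminus\{u,v\}$, so $d(x,u)=d(x,w)+1=d(x,v)$. By Lemma \ref{l:boundary2} every distinctive vertex of the pair lies outside $\cup_j C_1^j$, and since $u,v\notin\cup_j C_1^j$ by definition of the $C_1^j$, we obtain $\mu_1(u,v)=|\{w\notin\cup_j C_1^j : d(u,w)\neq d(v,w)\}|=|\mathcal{D}_G(u,v)|=2$. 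Hence $\mu(G)\leq\mu_1(G)\leq\mu_1(u,v)=2$, which together with the lower bound yields $\mu(G)=2$.

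I expect the only delicate point to be the separation claim in the second paragraph, namely verifying rigorously—from the block (clique-tree) structure together with non-elementarity—that removing $K_r\setminus\{u,v\}$ really isolates $\{u,v\}$ from a nonempty external component; everything else is bookkeeping or a reuse of arguments already available in the excerpt.
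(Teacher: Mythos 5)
Your proof is correct and takes essentially the same route as the paper's: both work with the tagged pair $u,v\in K_r$ with $\deg(u)=r-1=\deg(v)$, use non-elementarity to obtain a vertex of $K_r$ with a neighbour outside $K_r$ so that $K_r\setminus\{u,v\}$ is a common separating subset with $S_1(u,v)=K_r\setminus\{u,v\}$, and conclude $\mu(G)=\mu_1(u,v)=2$. The paper's proof is merely terser, leaving implicit the separation argument, the evaluation $\mathcal{D}_G(u,v)=\{u,v\}$, and the lower bound $\mu(G)\geq 2$, all of which you spell out correctly.
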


\begin{proof} Let $K_r$ be a maximal complete subgraph in $G$ with $r\geq 3$ and two vertices $u,v\in K_r$ such that $deg(u)=r-1=deg(v)$. Since $G$ is non-elementary, $G\neq K_r$ and there is a vertex $w\in K_r$ with $deg(w)\geq r$. Thus, $S_1(u,v)=K_r\setminus \{u,v\}$ and $\mu(G)=\mu_1(u,v)=2$.
\end{proof}

\begin{proposition}\label{P:non-tagged} It $G$ is a non-elementary non-tagged block graph, then for every pair of vertices $x,y\in G$ either $(x,y)\in \mathcal{P}(G)$ or $|\mathcal{D}_G(x,y)|\geq |G|-1$. 
\end{proposition}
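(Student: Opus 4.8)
The plan is to fix distinct vertices $x,y$ and analyze the unique geodesic $[x,y]$ guaranteed by Remark \ref{r:unique}. Writing $d=d(x,y)$ and $x=p_0,p_1,\dots,p_d=y$ for this geodesic, each consecutive pair $p_{i-1},p_i$ lies in a single block (a maximal complete subgraph), since within a complete graph a geodesic uses exactly one edge; consequently every internal $p_i$ is a cut vertex. I would then split the argument according to the parity of $d$, i.e.\ according to whether the midpoint of $[x,y]$ is a vertex or the midpoint of an edge, and in each case either exhibit a common separating subset (placing $(x,y)$ in $\mathcal{P}(G)$) or show that at most one vertex fails to be distinctive.

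When $d=2r$ is even, the midpoint $p_r$ is internal, hence a cut vertex lying in $S(x,r)\cap S(y,r)$. If $G\smallsetminus\{p_r\}$ has any component avoiding both $x$ and $y$, Remark \ref{r:cut} immediately gives $(x,y)\in\mathcal{P}(G)$. Otherwise $G\smallsetminus\{p_r\}$ has exactly the two components containing $x$ and $y$, and using uniqueness of geodesics I would check that for every $z\neq p_r$ the geodesic to the far endpoint is forced through $p_r$ while the geodesic to the near endpoint is not, so $d(x,z)\neq d(y,z)$; hence $\mathcal{D}_G(x,y)=V(G)\smallsetminus\{p_r\}$ and $|\mathcal{D}_G(x,y)|=|G|-1$.

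When $d=2r+1$ is odd, let $B=K_n$ be the middle block containing $p_r,p_{r+1}$, and for each $w\in B$ let $A_w$ be the component of $G\smallsetminus E(B)$ containing $w$, so that $x\in A_{p_r}$ and $y\in A_{p_{r+1}}$. A direct distance computation, using that any geodesic crossing $B$ uses a single edge of $B$, shows that all vertices of $A_{p_r}\cup A_{p_{r+1}}$ are distinctive, whereas for each $w\in B\smallsetminus\{p_r,p_{r+1}\}$ every vertex of $A_w$ satisfies $d(x,z)=d(z,w)+1+r=d(y,z)$ and is thus equidistant. So the non-distinctive vertices are exactly $\bigcup_{w\neq p_r,p_{r+1}}A_w$. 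If some such $A_w$ has at least two vertices, then $w$ is a cut vertex in $S(x,r+1)\cap S(y,r+1)$ cutting off the nonempty set $A_w\smallsetminus\{w\}$ from both $x$ and $y$, and Remark \ref{r:cut} again yields $(x,y)\in\mathcal{P}(G)$. Otherwise every side component $A_w$ is a singleton, contributing $n-2$ non-distinctive vertices in total.

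The decisive step, and the place where the non-tagged hypothesis enters, is this last subcase. If every side component is a singleton then each of the $n-2$ vertices of $B\smallsetminus\{p_r,p_{r+1}\}$ has degree $n-1$; if $n\geq 4$ there are at least two such vertices, so the maximal complete subgraph $B$ of size $\geq 3$ witnesses that $G$ is tagged, contradicting the hypothesis. Hence $n\leq 3$, the non-distinctive set has size at most one, and $|\mathcal{D}_G(x,y)|\geq |G|-1$. I expect the bookkeeping of the odd case to be the main obstacle: pinning down precisely which components are distinctive, verifying that the candidate cut vertex genuinely lies on both spheres $S(x,r+1)$ and $S(y,r+1)$, and then invoking the definition of tagged cleanly to eliminate the remaining configuration.
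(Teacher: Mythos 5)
Your proposal is correct and follows essentially the same route as the paper's proof: split on the parity of $d(x,y)$, use the midpoint cut vertex in the even case and the middle block in the odd case, detect a common separating subset via Remark \ref{r:cut} when some far component exists, and invoke the non-tagged hypothesis to force the middle block to have size at most $3$ in the remaining subcase. Your write-up is in fact slightly more careful than the paper's (which states $\mathcal{D}_G(x,y)\subset K_r\setminus\{x',y'\}$ where it means the set of non-distinctive vertices), but the argument is the same.
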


\begin{proof} Given two vertices $x,y\in G$ we can distinguish two cases.

\textbf{Case 1:} If $k:=d_G(x,y)$ is even. Then, there is a vertex $w$ such that $d_G(x,w)=\frac{k}{2}=d_G(w,y)$. Then $G\setminus w$ has at least two connected components, $C_x,C_y$, containing $x$ and $y$ respectively and it is readily seen that $\forall \, v\in C_x$, $d_G(x,v)<d_G(v,y)$ and $\forall \, v\in C_y$, $d_G(x,v)>d_G(v,y)$. Therefore, if $T\setminus w=C_x\cup C_y$, then $|\mathcal{D}_G(x,y)|= |G|-1$. Otherwise, $w\in S_{k/2}(x,y)$ and $(x,y)\in \mathcal{P}(G)$.  

\textbf{Case 2:} If $k:=d_G(x,y)$ is odd. Then, there are two (adjacent) vertices, $x',y'\in [xy]$ 
 such that $d_G(x,x')=\frac{k-1}{2}=d_G(y',y)$ (with $x=x'$ and $y=y'$ if $k=1$). Let $K_r$ be the maximal complete subgraph containing $x'y'$. 

If there is a connected component $C$ in $G\setminus K_r$ which is not adjacent to $x'$ nor $y'$, then $r\geq 3$ and there is a vertex $w\in K_r\setminus \{x',y'\}$ such that $w\in S_{(k+1/2)}(x,y)$. Thus, $(x,y)\in \mathcal{P}(G)$. For every connected component $C_x$ of $G\setminus K_r$ adjacent to $x'$ and $\forall v\in C_x$, it is clear that $d_G(x,v)<d_G(v,y)$, and  for every connected component $C_y$ of $G\setminus K_r$ adjacent to $y'$ and $\forall v\in C_y$, it is clear that $d_G(x,v)<d_G(v,y)$. Thus, if every connected component of $G\setminus K_r$ is adjacent to $x'$ or $y'$, $\mathcal{D}_G(x,y)\subset K_r\setminus \{x',y'\}$. Thus, if  $r\leq 3$, $|\mathcal{D}_G(x,y)|\geq |G|-1$ and if $r>3$, $G$ is tagged leading to contradiction.
\end{proof}

\begin{theorem}\label{th:blockdimension} If $G$ is a non-elementary block graph, then $Dim(G) = \mu(G)$.
\end{theorem}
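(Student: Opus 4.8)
The plan is to establish the two inequalities $Dim(G)\le \mu(G)$ and $Dim(G)\ge \mu(G)$ separately. Since $G$ is non-elementary, Proposition \ref{P:non-elementary} gives $\mathcal{P}(G)\ne\emptyset$, so $\mu(G)$ is well defined and Theorem \ref{Th:mu} immediately yields $Dim(G)\le \mu(G)$. Hence the real content is the reverse inequality, which by Theorem \ref{th:distinctive} amounts to showing that $|\mathcal{D}_G(x,y)|\ge \mu(G)$ for every pair of distinct vertices $x,y$. I would split this according to whether $G$ is tagged or not, invoking Proposition \ref{P:tagged} and Proposition \ref{P:non-tagged} respectively.

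The key preliminary observation is that, for any $(v,v')\in \mathcal{P}_m(G)$, Lemma \ref{l:boundary2} forces every vertex $w$ in the separating components $C_m^{j}$ to satisfy $d(v,w)=d(v',w)$; hence no such vertex is distinctive, so $\mathcal{D}_G(v,v')\subseteq V\setminus \bigcup_j C_m^{j}$, and therefore $\mu_m(v,v')=|\mathcal{D}_G(v,v')|$. Since by the definition of $\mathcal{P}_m(G)$ there is at least one vertex lying in some $C_m^{j}$, this same remark shows $\mu(G)\le |G|-1$; this bound is what will let the non-separating pairs be absorbed.

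In the tagged case, Proposition \ref{P:tagged} gives $\mu(G)=2$, and since $Dim(G)\ge 2$ holds for every graph, the chain $2\le Dim(G)\le \mu(G)=2$ closes the argument. In the non-tagged case I would take an arbitrary pair $x,y$ and apply Proposition \ref{P:non-tagged}: either $(x,y)\in\mathcal{P}_m(G)$ for some $m$, in which case the observation above gives $|\mathcal{D}_G(x,y)|=\mu_m(x,y)\ge \mu_m(G)\ge \mu(G)$, or $(x,y)\notin\mathcal{P}(G)$, in which case $|\mathcal{D}_G(x,y)|\ge |G|-1\ge \mu(G)$. In either situation $|\mathcal{D}_G(x,y)|\ge \mu(G)$, so the minimum over all pairs is at least $\mu(G)$ and Theorem \ref{th:distinctive} gives $Dim(G)\ge \mu(G)$, completing the proof.

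The step I expect to be the main obstacle is the bookkeeping identity $\mu_m(v,v')=|\mathcal{D}_G(v,v')|$ together with the bound $\mu(G)\le |G|-1$: these are what reconcile the two genuinely different regimes in Proposition \ref{P:non-tagged}, namely the separating pairs, which are counted exactly by $\mu$, versus the non-separating pairs, whose distinctive sets are nearly everything. Once these are in hand the case analysis is routine, the heavy lifting having already been carried out in Propositions \ref{P:tagged} and \ref{P:non-tagged}.
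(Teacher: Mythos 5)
Your proof is correct and follows essentially the same route as the paper: the upper bound $Dim(G)\leq \mu(G)$ comes from Proposition \ref{P:non-elementary} and Theorem \ref{Th:mu}, and the lower bound comes from the tagged/non-tagged case split via Propositions \ref{P:tagged} and \ref{P:non-tagged}, exactly as in the paper's argument. The only (harmless) deviations are that in the tagged case you use the universal bound $Dim(G)\geq 2$ where the paper cites Proposition \ref{p:gen_tree}, and that you spell out the identity $|\mathcal{D}_G(x,y)|=\mu_m(x,y)$ (via Lemma \ref{l:boundary2}) and the bound $\mu(G)\leq |G|-1$, which the paper uses implicitly.
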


\begin{proof} By Proposition \ref{P:non-elementary}, $ \mathcal{P}(G) \neq \emptyset $. 

If $G$ is tagged, by propositions \ref{p:gen_tree} and \ref{P:tagged}, $Dim(G)=\mu(G)=2$.

If $G$ is not tagged, consider any pair of vertices $x,y\in G$. By Proposition \ref{P:non-tagged}, either $(x,y)\in \mathcal{P}_k(G)$ for some $k$ and $|\mathcal{D}_G(x,y)|=\mu_k(x,y)\geq \mu(G)$ or $|\mathcal{D}_G(x,y)|\geq |G|-1\geq \mu(G)$. Thus, $Dim(G)\geq \mu(G)$ and, by Theorem \ref{Th:mu}, $ Dim(G) =\mu(G) $. 
\end{proof}

The problem of computing $\mu(G)$ using the definition has relatively high computational complexity. To improve the interest of Theorem \ref{th:blockdimension}, this complexity can be reduced using some properties of common separating subsets in block graphs.

\begin{lemma}\label{l:complete} If $ G $ is a block graph and $ \mathcal{P}(G) \neq\emptyset $ then, for every vertices $ (v,v') \in \mathcal{P}(G)$ with $ S_m (v,v') $ critical, $ S_m (v,v') $ is complete.
\end{lemma}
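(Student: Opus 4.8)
The plan is to show that the critical radius $m$ is forced to equal $\lceil d(v,v')/2\rceil$ and that at this radius the entire set of equidistant vertices $S(v,m)\cap S(v',m)$ already lies inside a single block (or is a single vertex). Since $S_m(v,v')$ is a union of subsets of this sphere, it will then be complete.

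First I would record a reduction. By Lemma \ref{l:boundary2} every vertex of a cut-off component $C_m^{j}$ is non-distinctive, so $\{w\notin\bigcup_j C_m^{j} : d(v,w)\neq d(v',w)\}=\mathcal{D}_G(v,v')$ and hence $\mu_m(v,v')=|\mathcal{D}_G(v,v')|$ for every admissible $m$. Thus $\mu_k(v,v')$ does not depend on $k$, and criticality says exactly that $m$ is the smallest radius at which $(v,v')$ admits a common separating subset. So it suffices to control this minimal separating radius.

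The key structural step is a ``central gate'' lemma. Write $d=d(v,v')$. If $d$ is even let $c$ be the midpoint of $[vv']$; if $d$ is odd let $B_0$ be the block containing the central edge $cc'$, with $d(v,c)=d(v',c')=\lfloor d/2\rfloor$. I claim that for every equidistant vertex $w$ the geodesics $[vw]$ and $[v'w]$ meet at a branch vertex $b$ with $d(v,b)=d(v',b)=\lceil d/2\rceil$, where $b=c$ in the even case and $b\in B_0\smallsetminus\{c,c'\}$ in the odd case. To prove this I would look at the path $[vb]\cup[bv']$; since $[vb]$ and $[bv']$ share only $b$ it is a simple path of even length $2\,d(v,b)\ge d$. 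If its length exceeded $d$ it would detour from the unique geodesic $[vv']$ (Remark \ref{r:unique}), and that detour together with the matching arc of $[vv']$ would form a cycle, which by Remark \ref{r:cycle} lies in a single block $B$. As $B$ is complete, its exit vertices $p\in[vb]$ and $q\in[bv']$ satisfy $d(p,q)\le 1$; writing $d(v,p)=d(v',q)=\alpha$ and reading lengths along $[vv']$ gives $2\alpha+d(p,q)=d$. In the even case parity forces $d(p,q)=0$ and $p=q=c$, which would place $c$ on both $[vb]$ and $[bv']$ and hence give $c=b$; so no detour occurs, the path is $[vv']$, and $b=c$. In the odd case parity forces $d(p,q)=1$, $p=c$, $q=c'$, so the detour block is $B_0$ and $b\in B_0\smallsetminus\{c,c'\}$.

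Finally I would assemble the conclusion. A short sub-lemma states that any vertex strictly interior to a geodesic is a cut vertex separating its endpoints: its two geodesic-neighbours lie in distinct blocks, for otherwise the edge joining them inside a common block would shortcut the geodesic. Since $(v,v')\in\mathcal{P}(G)$, pick a common separating subset with a cut-off component and a vertex $w$ in it; then $w$ is equidistant (Lemma \ref{l:boundary2}) with $d(v,w)>\lceil d/2\rceil$, so its central gate $b$ is interior to both $[vw]$ and $[v'w]$ and therefore a cut vertex separating $w$ from both $v$ and $v'$. Hence $\{b\}$ is a common separating subset at radius $\lceil d/2\rceil$, so the minimal radius is exactly $m=\lceil d/2\rceil$. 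At this radius $S(v,m)\cap S(v',m)$ equals $\{c\}$ when $d$ is even (the unique equidistant vertex, by uniqueness of geodesics) and is contained in $B_0$ when $d$ is odd (the gate applied with $d(v,w)=\lceil d/2\rceil$ gives $w=b\in B_0$). As $S_m(v,v')$ sits inside this sphere, it lies in one block and is complete. The main obstacle is the central gate lemma, and within it the odd case: the parity bookkeeping along $[vv']$ that pins $p=c,\ q=c'$ is the delicate point, and it is precisely what excludes an equidistant vertex whose geodesics to $v$ and $v'$ branch away from the centre.
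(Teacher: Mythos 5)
Your proposal is correct, but it takes a genuinely different route from the paper's proof. The paper works directly with the vertices $c_i$ of the critical set $S_m(v,v')$: for each $c_i$ it introduces the branch vertex $u_i$ defined by $[vc_i]\cap[v'c_i]=[u_ic_i]$, uses the cycle-in-a-block property (Remark \ref{r:cycle}) together with uniqueness of geodesics (Remark \ref{r:unique}) to show that the $u_i$ are pairwise adjacent and all at the same distance from $v$ and from $v'$, and then invokes criticality: if that common distance $r$ were smaller than $m$, the clique $\{u_i\}$ would itself be a common separating subset at radius $r<m$, so in fact $u_i=c_i$ for all $i$ and $S_m(v,v')$ is complete. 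You instead (i) observe that $\mu_k(v,v')=|\mathcal{D}_G(v,v')|$ for every admissible $k$ (the same observation that drives the proof of Theorem \ref{Th:mu}), so criticality reduces to minimality of the separating radius; (ii) prove a global ``central gate'' lemma locating the branch vertex of \emph{every} equidistant vertex at the midpoint $c$ (even case) or inside the central block $B_0$ (odd case); and (iii) conclude that the critical radius is exactly $\lceil d(v,v')/2\rceil$ and that the entire set $S(v,m)\cap S(v',m)$ is $\{c\}$ or lies in $B_0$, so any $S_m(v,v')$ inside it is complete. Both arguments rest on the same two block-graph facts (unique geodesics, cycles lie in blocks), but your gate lemma proves strictly more than the paper's pairwise-adjacency claim: it localizes $S_m(v,v')$ in the block containing the middle of $[vv']$, which essentially hands you Remark \ref{r:maxcomplete} and the reduction $\mu(G)=\mu_1(G)$ of Proposition \ref{p:mu1} for free. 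The price is the detour/parity analysis, where two points deserve to be made explicit when writing this up: the excursion you form the cycle from should be the one around $b$ (last vertex of $[vb]$ on $[vv']$ and first vertex of $[bv']$ on $[vv']$), so that internal disjointness from $[vv']$ is guaranteed; and completeness of the block gives not only $d(p,q)\le 1$ but also that the legs from $p$ to $b$ and from $b$ to $q$ are single edges, which is what actually justifies the identity $d(v,p)=d(v',q)$ used in your parity count.
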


\begin{proof} If $ S_m (v,v') $ is critical, let $ S_m (v,v')=\{c_i\}_{i\in I} $ and for every $i\in I$ let $u_i$ be the vertex such that $[vc_i]\cap [v'c_i]=[u_ic_i]$.
For any  $u_i\neq u_j$, let $x,y$ be the vertices such that $ [vu_i]\cap [vu_j] = [vx] $ and $ [v'u_i]\cap [v'u_j] = [v'y] $.

Claim: $ [xu_i]\cup [u_iy]\cup [yu_j]\cup [u_jx] $ defines a cycle.

Suppose $ [u_i y]\cap [u_j x] \neq\emptyset $ (if $ [u_i x]\cap [u_j y] \neq\emptyset $ the same argument holds). Consider any vertex $w \in [u_i y]\cap [u_j x] $, then let $ d_G(v,w) = a ,\, d_G(w,c_j) = b ,\, d_G(v',w) = a' ,\, d_G(w,c_i) = b $ (See Figure \ref{Fig:cycle}). 
	\begin{figure}[h]
		\centering
		\includegraphics[scale=0.5]{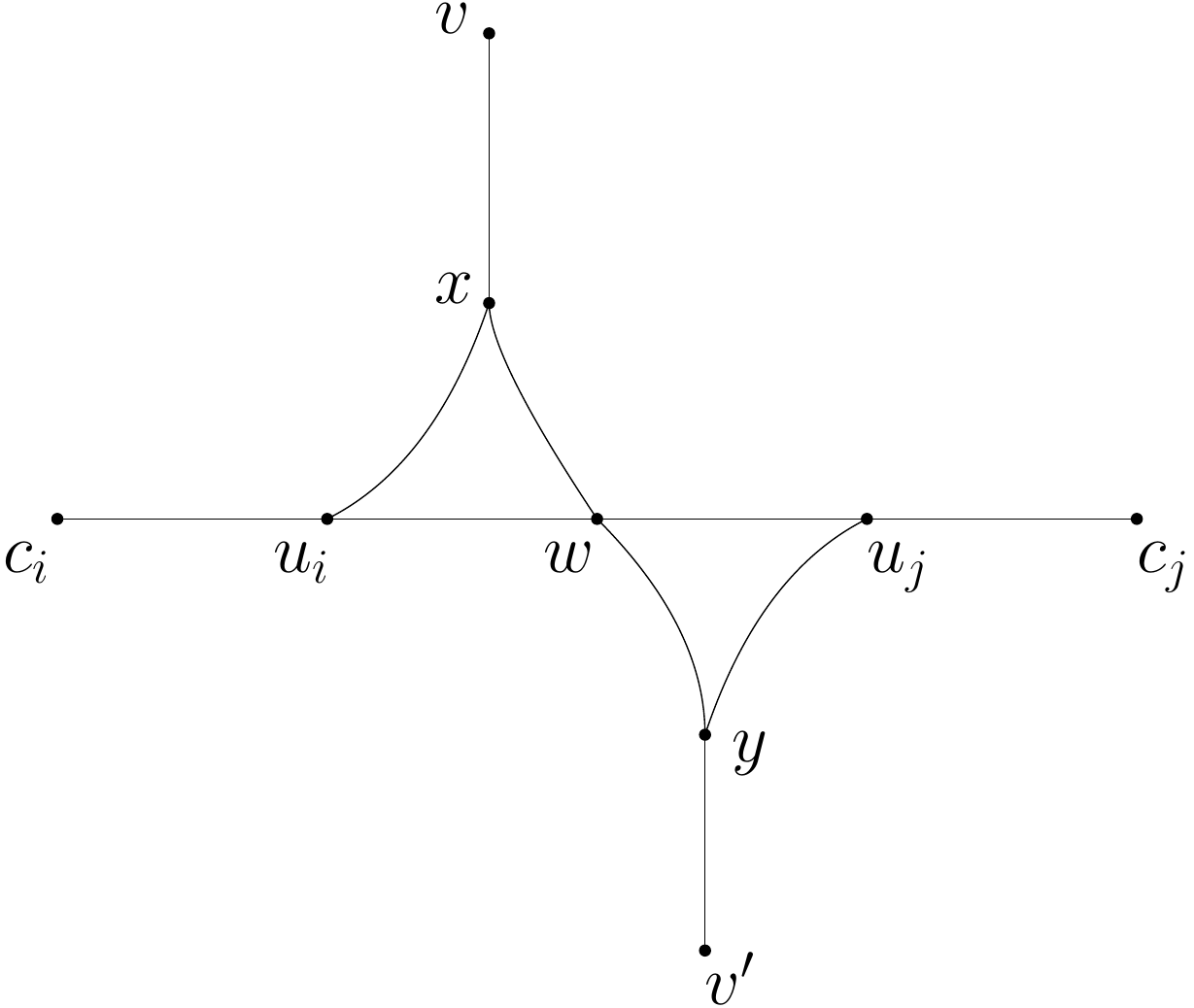}
		\caption{Since geodesics are unique, $ [xu_i]\cup [u_iy]\cup [yu_j]\cup [u_jx] $ defines a cycle.}
		\label{Fig:cycle}
	\end{figure}
Since $ m = a' + b' = a + b $ and by Remark \ref{r:unique} geodesics are unique, therefore
$$ \begin{array}{ccc}
a' + b' < a + b' & \Longrightarrow & a' < a \\ 
a + b < a' + b & \Longrightarrow & a < a'
\end{array}  $$
leading to contradiction and proving the claim.

By Remark \ref{r:cycle}, $ u_i ,\, u_j $ are adjacent and therefore, $ \{u_i \}_{i\in I} $ induces a complete subgraph.

Since  $d_G(v,c_i)=d_G(v',c_i)$, then $d_G(v,u_i)=d_G(v',u_i)$ $\forall i$. Moreover, we can see that $d_G(v,u_i)=d_G(v,u_j)$ for every $i\neq j$. Suppose $d_G(v,u_i)<d_G(v,u_j)$ for some $i\neq j$. Then, since $ \{u_i \}_{i\in I} $ induces a complete subgraph and $d_G(v,u_i)=d_G(v',u_i)$ $\forall i$, $d_G(v,u_i)+1=d_G(v',u_i)+1=d_G(v',u_j)=d_G(v,u_j)$. Hence, since geodesics in $G$ are unique, 
$u_i\in [vu_j]\cap [v'u_j]$ leading to contradiction.

Suppose $ r = d_G(v,u_i) < m $ for every $i\in I$. Then,  $ \{u_i \}_{i\in I}\subset S_r(v,v') $ with $r<m$ and $S_m(v,v')$ is not critical leading to contradiction. Therefore,  $ u_i=c_i \, \forall  i\in I $ and $ S_m(v,v')=\{c_i\}_{i\in I} $  is complete.
\end{proof}

\begin{remark}\label{r:maxcomplete} Suppose $G$ is a block graph and 
$ (v,v') \in \mathcal{P}(G) $ with $ S_m (v,v') $ critical. If $| S_m(v,v') | > 1$, $K$ is the complete maximal subgraph such that $ S_m (v,v') \subseteq K $ and $v,v' \notin K$, the following properties hold.
	\begin{enumerate}
		\item[(i)] $v,v'$ are in different connected components of $G\smallsetminus K$.
		\item[(ii)] $ |K| = |S_m (v,v')| + 2 $.
		\item[(iii)]  there are at least there vertices $x_1,x_2,x_3 \in K$ such that $deg_G(x_i)\geq |K|$.
	\end{enumerate}
\end{remark}

\begin{proposition}\label{p:mu1} If $G$ is a block graph, then \[\mu (G)=\mu_1 (G).\]
\end{proposition}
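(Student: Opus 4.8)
The inequality $\mu(G)\le \mu_1(G)$ is immediate from the definition $\mu(G)=\min_{m\in\NN}\mu_m(G)$, so the whole content is to prove $\mu_1(G)\le \mu(G)$; the plan is to take a pair that realizes $\mu(G)$ at some level $m$ and ``slide'' it down to a level-$1$ pair with the same set of distinctive vertices. Concretely, I would choose $(v,v')$ with $S_m(v,v')$ critical, so that $\mu_m(v,v')=\mu(G)$. First I would record that, by Lemma \ref{l:boundary2}, every $w\in\bigcup_j C_m^j$ satisfies $d(v,w)=d(v',w)$; hence no distinctive vertex of $v,v'$ lies in $\bigcup_j C_m^j$, and therefore $\mu_m(v,v')=|\mathcal{D}_G(v,v')|$, giving $|\mathcal{D}_G(v,v')|=\mu(G)$. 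If $m=1$ there is nothing more to do, so assume $m\ge 2$.

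The key step is to produce the level-$1$ pair. Using Lemma \ref{l:complete} and Remark \ref{r:maxcomplete}, when $|S_m(v,v')|>1$ the critical set sits inside a single maximal complete subgraph $K$ with $v,v'$ lying in different connected components of $G\setminus K$ and $|K|=|S_m(v,v')|+2$ (the degenerate case $|S_m(v,v')|=1$, where $S_m(v,v')=\{c\}$ is just a cut vertex, is handled separately but identically in spirit). I would then let $a$ be the unique vertex through which every geodesic from $v$ to $S_m(v,v')$ enters $K$, and $b$ the analogous gate for $v'$. A short argument shows $d(v,a)=d(v',b)=m-1$ and $a,b\notin S_m(v,v')$ (if $a$ belonged to $S_m(v,v')$ the remaining elements would lie at distance $m+1$ from $v$, a contradiction); thus $\{a,b\}=K\setminus S_m(v,v')$. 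Since $a,b\in K$ we get $d(a,b)\le 2$ and $S_m(v,v')\subseteq S(a,1)\cap S(b,1)$, while the components $C_m^j$ remain disconnected from both $a$ and $b$; hence $(a,b)\in\mathcal{P}_1(G)$ with $S_m(v,v')\subseteq S_1(a,b)$.

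It then remains to prove $\mathcal{D}_G(a,b)=\mathcal{D}_G(v,v')$. I would verify this by splitting the vertices according to the gate of $K$ through which they are reached: a vertex $w$ whose geodesics to $K$ enter through $a$ satisfies $d(v,w)<d(v',w)$ and $d(a,w)<d(b,w)$ simultaneously (so it is distinctive for both pairs); symmetrically for the gate $b$; and a vertex reached through any element of $S_m(v,v')$ (equivalently, lying in some $C_m^j$ or in $S_m(v,v')$ itself) is equidistant for both pairs. This gives $\mathcal{D}_G(a,b)=\mathcal{D}_G(v,v')$ as sets. Finally, exactly as for $(v,v')$, Lemma \ref{l:boundary2} yields $\mu_1(a,b)=|\mathcal{D}_G(a,b)|$, whence
\[\mu_1(G)\le \mu_1(a,b)=|\mathcal{D}_G(a,b)|=|\mathcal{D}_G(v,v')|=\mu(G),\]
and combined with $\mu(G)\le\mu_1(G)$ we conclude $\mu(G)=\mu_1(G)$.

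The step I expect to be the real obstacle is the distance bookkeeping in the third paragraph together with the clean definition of the gates $a,b$: one must make sure the gate vertices are genuinely at level $m-1$ (this is where criticality and the completeness given by Lemma \ref{l:complete} are used), and the degenerate single-cut-vertex case $|S_m(v,v')|=1$ must be checked by hand, since there $a$ and $b$ need not be adjacent and the enveloping complete subgraph $K$ may fail to exist.
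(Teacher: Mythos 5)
Your proposal is correct and follows essentially the same route as the paper's own proof: both take a pair $(v,v')$ with $S_m(v,v')$ critical, invoke Lemma \ref{l:complete} and Remark \ref{r:maxcomplete} to place $S_m(v,v')$ inside a maximal complete subgraph $K$, pass to the two gate vertices of $K$ (the paper's $x,y$, your $a,b$) to obtain a level-$1$ pair realizing $\mu(G)$, and treat the single-cut-vertex case $|S_m(v,v')|=1$ separately. The only difference is presentational: you justify $\mu_1(a,b)=\mu_m(v,v')$ by proving $\mathcal{D}_G(a,b)=\mathcal{D}_G(v,v')$, whereas the paper shows $S_1(x,y)=S_m(v,v')$ and asserts the equality of the $\mu$-values directly, so your bookkeeping is, if anything, slightly more explicit.
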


\begin{proof} Suppose $\mu (G) = \mu _m (v,v')$ for some $v,v'\in G$ and $m>0$ such that $ S_m(v,v') $ is critical. By Lemma \ref{l:complete}, $ S_m(v,v') $ is complete. Let $K$ be a complete maximal subgraph such that $ S_m (v,v') \subseteq K $. 

If $ | S_m(v,v') | > 1 $ and $v,v' \in K$, then $ m = 1 $ and $ \mu (G) = \mu _1 (v,v') = \mu _1 (G) $. If $ | S_m(v,v') | > 1 $ and $v,v' \notin K$, then by remark \ref{r:maxcomplete}, $v,v'$ are in different connected components $ C_v , C_{v'} $ of $G\smallsetminus K$ respectively. 
Let $x,y \in K$ be the vertices adjacents to $C_v , C_{v'}$ resp., then $ d_G(x,z) = d_G(y,z) = 1 $ for every $z \in S_m(v,v') = K \smallsetminus \{x,y\}$ and $ S_1(x,y) = S_m(v,v') $. Therefore $ \mu (G) = \mu _m (v,v') = \mu _1 (x,y) = \mu _1 (G) $. 

Suppose $S_m(v,v') = w$ and let $x, y$ be the vertices adjacent to $w$ in the geodesic paths $[vw]$ and $[v'w]$ respectively. Obviously $ S_1(x,y) = w = S_m(v,v') $ and $ \mu (G) = \mu _m (v,v') = \mu _1 (x,y) = \mu _1 (G) $.
\end{proof}

\begin{definition} If $G$ is a non-elementary block graph, then
\begin{itemize}
	\item a vertex $w\in G$ is a \emph{3-cut vertex} if $w$ is a cut vertex and $deg_G(w)\geq 3$, 
	\item a maximal complete subgraph $K\subset G$ is a \emph{3-cut block} if $|K|\geq 3$, 
	\item a \emph{3-cut piece} is either a 3-cut vertex or a 3-cut block.
\end{itemize}
\end{definition}

\begin{remark} If $G$ is a non-elementary block graph and $w$ is a 3-cut vertex, then there are three vertices $v_1,v_2,v_3\in S(w,1)$ such that if $C_i$ is the connected component of $T\setminus w$ containing $v_i$, then $C_1\cup C_2$ and $C_3$ are disjoint. Thus, $(v_1,v_2)\in \mathcal{P}_1(G)$, $S_1(v_1,v_2)=w$ and $\mu_1(v_1,v_2)=|C_1\cup C_2|$.

If $G$ is a non-elementary block graph and $K$ is a 3-cut block, then there are at least three vertices $v_1,v_2,v_3\in K$ and one of them, suppose it is $v_3$ satisfies that $deg_G(v_3)\geq |K|$. Thus, $(v_1,v_2)\in \mathcal{P}_1(G)$, $S_1(v_1,v_2)=K\setminus\{v_1,v_2\}$ and $\mu_1(v_1,v_2)=|C_1\cup C_2|$ where $C_i$ is the connected component of $G\setminus E(K)$ containing $v_i$.
\end{remark}

\begin{definition} If $G$ is a non-elementary block graph, then 
\begin{itemize}
	\item a 3-cut vertex $w\in G$ is \emph{extremal} if there are two vertices $v,v'$ adjacent to $w$ such that $S_1(v,v')=w$ and the connected components $C_v,C_{v'}$ of $T\setminus w$ containing $v,v'$ respectively (where possibly $C_v=C_{v'}$ if $v,v'$ are adjacent) do not contain any 3-cut piece,
	\item a 3-cut block $K$ is \emph{extremal} if 
	there are two vertices $v,v'\in K$ such that $S_1(v,v')\subset K$ and 
	the connected components $C_v,C_{v'}$ of $T\setminus E(K)$ containing $v,v'$ respectively do not contain any 3-cut piece, 
	\item a \emph{3-cut piece} is \emph{extremal} it if is either an extremal 3-cut vertex or an extremal 3-cut block.
\end{itemize}
\end{definition}

Given a block graph $G$, let $\mathcal{E}(G)\subset V\times V$ be the set of pairs of different vertices, $(v,v')$, such that one of the following conditions holds: 
\begin{itemize}
	\item $S_1(v,v')=w$ with $w$ a 3-cut vertex and the components $C_v,C_{v'}$ of $T\setminus w$ containing $v,v'$ respectively, do not contain a 3-cut piece,
	\item $v,v',S_1(v,v')\subset K$ with $K$ a 3-cut block 
	and the components $C_v,C_{v'}$ of $T\setminus E(K)$ containing $v,v'$ respectively, do not contain a 3-cut piece.
\end{itemize}

\begin{remark} Notice that if $T$ is a tree, the extremal 3-cut pieces are exactly the major vertices. Also, $\mathcal{E}(G)\subset \mathcal{P}_1(G)$.
\end{remark}

Given a 3-cut piece $P$, let us denote $G\setminus [P]$ the set $G\setminus P$ if $P$ is a cut vertex or $G\setminus E(P)$ if $P$ is a cut block.

\begin{theorem}\label{th:mu1} If $G$ is a non-elementary block graph with $\mu (G)<\infty$, then 
\[\mu (G)=\min_{(v,v')\in \mathcal{E}(G)} \mu_1(v,v').\] 
\end{theorem}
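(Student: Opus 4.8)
The plan is to prove the identity $\mu(G)=\min_{(v,v')\in\mathcal{E}(G)}\mu_1(v,v')$ by establishing two inequalities. By Proposition \ref{p:mu1} we already know $\mu(G)=\mu_1(G)=\min_{(v,v')\in\mathcal{P}_1(G)}\mu_1(v,v')$, and since $\mathcal{E}(G)\subset\mathcal{P}_1(G)$, the inequality $\mu(G)\leq\min_{(v,v')\in\mathcal{E}(G)}\mu_1(v,v')$ is immediate (the minimum over a subfamily can only be larger). So the real content is the reverse inequality: for a pair $(v,v')\in\mathcal{P}_1(G)$ realizing $\mu(G)$ with $S_1(v,v')$ critical and complete (Lemma \ref{l:complete}), I must produce a pair in $\mathcal{E}(G)$ whose $\mu_1$-value is no larger. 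First I would fix a critical pair $(v,v')$ achieving $\mu(G)$ and let $P$ be the associated 3-cut piece: either the cut vertex $w=S_1(v,v')$ (when $|S_1(v,v')|=1$) or the maximal complete subgraph $K\supseteq S_1(v,v')$ (when $|S_1(v,v')|>1$), using Remark \ref{r:maxcomplete} to identify the adjacent vertices $x,y\in K$ for the block case.

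The heart of the argument is an \emph{extremality reduction}. If $(v,v')$ already lies in $\mathcal{E}(G)$ we are done, so suppose the relevant components $C_v,C_{v'}$ of $G\setminus[P]$ contain a further 3-cut piece $P'$. The idea is to ``push'' the separating piece outward: replace $P$ by a 3-cut piece $P'$ lying strictly inside one of the components, together with vertices $v'',v'''$ adjacent to $P'$ whose separating set is contained in $P'$. The key monotonicity claim I would prove is that this replacement does not increase $\mu_1$, i.e. $\mu_1(v'',v''')\leq\mu_1(v,v')$. Intuitively, moving the cut piece $P'$ deeper into a component shrinks the set $G\setminus(\cup_j C_m^j)$ of vertices that need to be counted, because the distinctive vertices of the new pair are confined to a smaller region of $G$ (everything on the far side of $P'$ becomes non-distinctive by Lemma \ref{l:boundary2}). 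Since $\mu(G)<\infty$ and each reduction step moves to a 3-cut piece strictly closer to a ``leaf'' region of the finite relevant part of $G$, the process terminates, and iterating it must terminate at a pair in $\mathcal{E}(G)$, i.e. one whose flanking components contain no 3-cut piece.

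I expect the main obstacle to be making the monotonicity step precise and verifying that the reduction genuinely terminates at a pair in $\mathcal{E}(G)$ rather than cycling or stalling. Concretely, I must check that when a component $C_v$ contains a 3-cut piece $P'$, choosing the pair $(v'',v''')$ associated to $P'$ (via Remark \ref{r:cut} and the analysis in Remark \ref{r:maxcomplete}) yields $S_1(v'',v''')\subseteq P'$ and that the two new flanking components are strictly contained in $C_v$. The counting inequality $\mu_1(v'',v''')\leq\mu_1(v,v')$ then follows because every vertex counted for the new pair is also distinctive for $(v,v')$: a vertex $z$ with $d(v'',z)\neq d(v''',z)$ that is not separated off by $P'$ lies on the $v$-side, where by uniqueness of geodesics (Remark \ref{r:unique}) its distances to $v$ and $v'$ differ accordingly. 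The finiteness hypothesis $\mu(G)<\infty$ is essential here, since it guarantees the critical region is finite so that the outward-pushing terminates; I would use it to bound the number of iterations. Once termination is established the reverse inequality follows, and combining it with the easy direction completes the proof.
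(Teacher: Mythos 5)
Your skeleton matches the paper's proof: reduce to $m=1$ via Proposition \ref{p:mu1}, get the easy inequality from $\mathcal{E}(G)\subset\mathcal{P}_1(G)$, and then try to relocate the separating piece of a minimizing pair into a flanking component. But the key step of your reduction is false as stated. You claim that whenever a flanking component $C_v$ contains a 3-cut piece $P'$, the pair $(v'',v''')$ associated to $P'$ (via Remarks \ref{r:cut} and \ref{r:maxcomplete}) has $S_1(v'',v''')\subseteq P'$ and flanking components strictly inside $C_v$. A 3-cut piece can, however, be oriented \emph{backwards}. Suppose $C_v$ contains a triangle $\{a_1,a_2,w'\}$ attached to the rest of $C_v$ (towards $P$) through $a_1$, with $\deg(a_2)=2$ and with $w'$ having one further neighbour $b_1$ starting a pendant path $b_1,\dots,b_n$. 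Then $w'$ is a 3-cut vertex inside $C_v$, but $G\setminus w'$ has exactly two components, $D_0\supseteq\{a_1,a_2\}\cup P\cup\{v,v'\}$ and $D_1=\{b_1,\dots,b_n\}$; any pair straddling $w'$ leaves no component free of both vertices, so the \emph{only} pair admitting $w'$ as a common separating subset is $(a_1,a_2)$, whose flanking region is $D_0$. That region is not contained in $C_v$, it contains $P$, $v$ and $v'$, and $\mu_1(a_1,a_2)$ counts vertices on the far side of $P$, so it can exceed $\mu_1(v,v')$ and can even be infinite when $G$ is infinite. Hence your monotonicity step can increase the value or stall, and the termination argument built on it collapses.

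What closes this hole in the paper is precisely the notion you never invoke: an \emph{extremal} 3-cut piece. By definition, an extremal piece comes with a pair $(x,y)$ whose flanking components contain no 3-cut piece at all; since $P$ itself is a 3-cut piece, those components cannot reach back to $P$, so they are forced inside $C_1$, giving $\mu_1(x,y)=|C_1'\cup C_2'|<|C_1|\le\mu(G)$ --- an immediate contradiction with the minimality of the starting pair. Note also that once such a well-oriented pair exists, no iteration is needed at all: the starting pair realizes the global minimum of $\mu_1$ over $\mathcal{P}_1(G)$, so a single strictly smaller value in $\mathcal{P}_1(G)\supset\mathcal{E}(G)$ finishes the proof by contradiction, which is exactly how the paper argues. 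The selection of a correctly oriented piece is the genuine mathematical content here (it is why $\mathcal{E}(G)$ is defined through extremal pieces, and it is delicate: chains of backward-pointing triangles as above show that a finite component containing 3-cut pieces need not contain well-oriented ones, so even the paper's own assertion that finiteness yields an extremal piece deserves scrutiny); your sketch attributes it to remarks that do not supply it.
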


\begin{proof} By Proposition \ref{p:mu1}, $ \mu(G) = \mu_1(G) = \mu_1(v,v') $ for some  vertices $v,v' \in G$. Then, $S_1(v,v')$ is contained in some 3-cut piece $P$. Let $C_1,C_2$ be the connected components of $G\smallsetminus [P]$ containing $v,v'$ respectively. Hence, $\mu_1(v,v')=|C_1\cup C_2|$.

Suppose $ (v,v') \notin \mathcal{E}(G) $. Then, either $C_1$ or $C_2$ contains a 3-cut piece. Suppose without loss of generality that $ C_1 $ contains a 3-cut piece. Since  $ |C_1| \leqslant \mu(G) $, $C_1$ is finite and therefore $C_1$ contains an extremal 3-cut piece $P'$. Then, there are two vertices $(x,y) \in  \mathcal{E}(G) $ such that $S_1(x,y)\subset P'$ and such that the connected components, $C'_1,C'_2$, of $T\setminus [P']$ containing $x,y$ respectively, do not contain any 3-cut piece. Hence, in particular, $C'_1,C'_2$ do not contain $P$ and therefore $C'_1,C'_2\subset C_1\setminus S_1(x,y)$. Thus, $\mu_1(x,y)=|C'_1\cup C'_2|<|C_1|\leq \mu_1(v,v')$ leading to contradiction.
\end{proof}

\begin{corollary}  If $G$ is a non-elementary block graph with $\mu (G)<\infty$, then  
\[Dim(G)=\min_{(v,v')\in \mathcal{E}(G)} \mu_1(v,v').\] 
\end{corollary}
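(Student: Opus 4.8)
The plan is to combine the two results established immediately beforehand, namely Theorem~\ref{th:blockdimension} and Theorem~\ref{th:mu1}. The corollary asserts that for a non-elementary block graph $G$ with $\mu(G)<\infty$, we have $Dim(G)=\min_{(v,v')\in \mathcal{E}(G)} \mu_1(v,v')$. Since both of the quantities $Dim(G)$ and the right-hand side have already been identified with $\mu(G)$ under the stated hypotheses, the statement should follow by simple transitivity, with essentially no new content to prove.

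First I would invoke Theorem~\ref{th:blockdimension}, which guarantees that for any non-elementary block graph $G$ one has $Dim(G)=\mu(G)$; note that this already uses $\mathcal{P}(G)\neq\emptyset$, which is provided by Proposition~\ref{P:non-elementary} for non-elementary $G$, so the hypotheses of the corollary are exactly what is needed. Next I would apply Theorem~\ref{th:mu1}, whose hypotheses (non-elementary block graph, $\mu(G)<\infty$) match those of the corollary verbatim, to conclude that $\mu(G)=\min_{(v,v')\in \mathcal{E}(G)} \mu_1(v,v')$. Chaining these two equalities yields
\[
Dim(G)=\mu(G)=\min_{(v,v')\in \mathcal{E}(G)} \mu_1(v,v'),
\]
which is precisely the claim.

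I do not anticipate any genuine obstacle here: the corollary is a direct synthesis of the preceding theorem and the main structural theorem, and the finiteness assumption $\mu(G)<\infty$ is carried over unchanged so that Theorem~\ref{th:mu1} applies. The only point deserving a moment's care is to verify that the hypotheses of Theorem~\ref{th:blockdimension} are met—specifically that $G$ being non-elementary suffices, via Proposition~\ref{P:non-elementary}, to ensure $\mathcal{P}(G)\neq\emptyset$ so that $\mu(G)$ is well defined and equals $Dim(G)$. Once this is noted, the proof is a one-line application of transitivity of equality.
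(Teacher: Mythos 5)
Your proposal is correct and matches the paper's (implicit) argument exactly: the corollary is stated without proof precisely because it is the immediate combination of Theorem~\ref{th:blockdimension}, giving $Dim(G)=\mu(G)$ for non-elementary block graphs, and Theorem~\ref{th:mu1}, giving $\mu(G)=\min_{(v,v')\in \mathcal{E}(G)} \mu_1(v,v')$. Your added care in checking that non-elementarity yields $\mathcal{P}(G)\neq\emptyset$ via Proposition~\ref{P:non-elementary} is exactly the right hypothesis bookkeeping.
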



\begin{thebibliography}{99}


\bibitem{ALA} G. Ali, R. Laila, M. Ali, Metric Dimension of Some Families of Graph, Math. Sci. Lett. 5(1), (2016) 99--102.

\bibitem{BC} R. F. Bailey, P. J. Cameron, Base size, metric dimension and other invariants of groups and graphs, Bulletin of the London Mathematical Society 43, (2011) 209--242.

\bibitem{CH} J. C\'aceres, C. Hernando, M. Mora, I. M. Pelayo, M. L.
Puertas, C. Seara, D. R. Wood, On the metric dimension of cartesian products of graphs, SIAM Journal on Discrete Mathematics 21,  (2007) 423--441.

\bibitem{CGH} G. G. Chappell, J. Gimbel, C. Hartman, Bounds on the metric and partition dimensions of a graph, Ars Combinatoria 88,  (2008) 349--366.

\bibitem{CEJO} G. Chartrand, L. Eroh, M. A. Johnson, O. R. Oellermann,
Resolvability in graphs and the metric dimension of a graph,
Disc. Appl. Math. 105, (2000) 99--113.

\bibitem{CSZ} G. Chartrand, E. Salehi, P. Zhang,The partition dimension of a graph, Aequationes
Mathematicae 59, (2000) 45--54.


\bibitem{ERY} A. Estrada-Moreno, J. A. Rodr\'iguez-Vel\'azquez, I. G. Yero, The k-metric dimension of a graph, Applied Mathematics  \& Information Sciences 9(6), (2015) 2829--2840.


\bibitem{EYR1} A. Estrada-Moreno, I. G. Yero, J. A. Rodr\'iguez-Vel\'azquez,  The k-metric dimension of graphs: a general approach, (2016) arXiv:1605.06709v2.




\bibitem{EYR2} A. Estrada-Moreno, I. G. Yero, J. A. Rodr\'iguez-Vel\'azquez, The k-metric dimension of the lexicographic product of graphs, Discrete Mathematics 339(7), (2016) 1924--1934.

\bibitem{EYR3} A. Estrada-Moreno, I. G. Yero, J. A. Rodr\'iguez-Vel\'azquez, The k-Metric Dimension
of Corona Product Graphs, Bulletin of the Malaysian Mathematical Sciences Society 39, (2016) 135--156.






\bibitem{FGO} M. Fehr, S. Gosselin, O. R. Oellermann, The partition dimension of Cayley digraphs,  Aequationes Mathematicae 71(1-2),  (2006) 1--18.



\bibitem{HM} F. Harary, R. A. Melter, On the Metric Dimension of a Graph, Ars Combinatoria 2, 
(1976) 191--195.

\bibitem{HH} T. W. Haynes, M. A. Henning, J. Howard,  Locating and total dominating sets in trees, Discrete Applied Mathematics 154,  (2006) 1293--1300.

\bibitem{J1} M. Johnson, Structure-activity maps for visualizing the graph variables arising in drug design, Journal of Biopharmaceutical Statistics 3(2), (1993) 203--236 .

\bibitem{J2} M. Johnson, Browsable structure-activity datasets, in:
R. Carb\'o-Dorca, P. Mezey (eds.), Advances in Molecular
Similarity, chap. 8, JAI Press Inc, Stamford, Connecticut,
(1998).

\bibitem{KCL} M. G. Karpovsky, K. Chakrabarty, L. B. Levitin. On a new class of codes for identifying vertices in graphs. IEEE Transactions on Information Theory 44, (1998) 599--611.


\bibitem{KRR} S. Khuller, B. Raghavachari, A. Rosenfeld, Landmarks in graphs, Discrete
Applied Mathematics 70(3), (1996) 217--229.

\bibitem{MT} R. A. Melter, I. Tomescu, Metric bases in digital geometry,
Computer Vision, Graphics, and Image Processing 25(1), (1984) 113--121.

\bibitem{OPZ} F. Okamoto, B. Phinezy, P. Zhang, The local metric dimension of a graph, Mathematica Bohemica 135, (2010) 239--255.

\bibitem{SZ} V. Saenpholphat, P. Zhang, Conditional resolvability in graphs: a survey,  International Journal of Mathematics and Mathematical Sciences 2004, 38, 
(2004) 1997--2017.

\bibitem{S1} P. J. Slater, Leaves of Trees, Congressus Numerantium  14, (1975)  549--559.

\bibitem{S2} P. J. Slater, Dominating and Reference Sets in a Graph, Journal of Mathematical and Physical Sciences  22(4), (1988) 445--455.

\bibitem{T} I. Tomescu, Discrepancies between metric dimension and partition dimension of a connected graph, Discrete Applied Mathematics 308(22), (2008) 5026--5031.

\bibitem{UTS} R. Ungrangsi, A. Trachtenberg, D. Starobinski. An implementation of indoor location detection systems based on identifying codes. Proceedings of Intelligence in Communication
Systems, INTELLCOMM 2004, Lecture Notes in Computer Science 3283, (2004) 175--189.


\bibitem{YER} I. G. Yero, A. Estrada-Moreno, J. A. Rodr\'iguez-Vel\'azquez,  On the complexity of computing the k-metric dimension of graphs, (2015)  arXiv:1401.0342v2.


\bibitem{YKR} I. G. Yero, D. Kuziak, J. A. Rodr\'iquez-Vel\'azquez, On the metric dimension of corona product graphs, Computers \& Mathematics with Applications 61,  (2011) 2793--
2798.

\bibitem{YR} I. G. Yero, J. A. Rodr\'iquez-Vel\'azquez, A note on the partition dimension of Cartesian product graphs, Applied
Mathematics and Computation 217(7),  (2010) 3571--3574.

\end{thebibliography}
\end{document}